\documentclass{article}

\usepackage{amsfonts}
\usepackage{amsmath}
\usepackage{amssymb}
\usepackage{amsthm}

\usepackage{authblk}

\usepackage{multicol}
\usepackage[margin=3cm]{geometry} 
\usepackage{soul} 

\usepackage[pagebackref, pdftex]{hyperref}

\renewcommand*{\backref}[1]{}
\renewcommand*{\backrefalt}[4]{
  \ifcase #1
  [No citations.]
  \or [#2]
  \else [#2]
  \fi }

\usepackage{marginnote} 
\long\def\@savemarbox#1#2{\global\setbox#1\vtop{\hsize\marginparwidth\@parboxrestore\tiny\raggedright #2}}
\marginparwidth .75in 
\marginparsep 7pt

\usepackage{graphicx}
\usepackage{import}

\usepackage{tikz}
\usetikzlibrary{calc, arrows, decorations.markings, decorations.pathmorphing, positioning, decorations.pathreplacing}

\AtBeginDocument{%
   \def\MR#1{}
}

\newcommand{\To}{\longrightarrow}
\newcommand{\C}{\mathbb{C}}
\newcommand{\D}{\mathcal{D}}
\newcommand{\Disc}{\mathbb{D}}
\newcommand{\F}{\mathcal{F}}
\newcommand{\HH}{\mathcal{H}}
\newcommand{\hyp}{\mathbb{H}}
\newcommand{\R}{\mathbb{R}}
\newcommand{\RP}{\mathbb{RP}}
\renewcommand{\S}{\mathcal{S}}
\newcommand{\U}{\mathbb{U}}
\newcommand{\Z}{\mathbb{Z}}
\newcommand{\ZZ}{\mathcal{Z}}
\DeclareMathOperator{\Fr}{Fr}
\DeclareMathOperator{\Gr}{Gr}
\DeclareMathOperator{\Hor}{Hor}
\let\Im\relax
\DeclareMathOperator{\Im}{Im}
\let\Re\relax
\DeclareMathOperator{\Re}{Re}
\DeclareMathOperator{\Mat}{Mat}
\DeclareMathOperator{\Spin}{Spin}
\DeclareMathOperator{\Tr}{Tr}

\numberwithin{equation}{section}

\newtheorem{theorem}{Theorem}

\newtheorem{lem}[equation]{Lemma}

\newtheorem{prop}[equation]{Proposition}

\newtheorem{defn}[equation]{Definition}

\newcommand{\refsec}[1]{Section~\ref{Sec:#1}}
\newcommand{\refdef}[1]{Definition~\ref{Def:#1}}
\newcommand{\reffig}[1]{Figure~\ref{Fig:#1}}

\newcommand{\refeqn}[1]{\eqref{Eqn:#1}}
\newcommand{\reflem}[1]{Lemma~\ref{Lem:#1}}
\newcommand{\refprop}[1]{Proposition~\ref{Prop:#1}}
\newcommand{\refthm}[1]{Theorem~\ref{Thm:#1}}

\begin{document}

\title{Spinors and horospheres} 

\author{Daniel V. Mathews} 
\affil{School of Mathematics, Monash University \\
\texttt{Daniel.Mathews@monash.edu}}

\date{}

\maketitle

\begin{abstract}
We give an explicit bijective correspondence between between nonzero pairs of complex numbers, which we regard as spinors or spin vectors, and horospheres in 3-dimensional hyperbolic space decorated with certain spinorial directions. This correspondence builds upon work of Penrose--Rindler 
and Penner.
We show that the natural bilinear form on spin vectors describes a certain complex-valued distance between spin-decorated horospheres, generalising Penner's lambda lengths to 3 dimensions. 

From this, we derive several applications. We show that the complex lambda lengths in a hyperbolic ideal tetrahedron satisfy a Ptolemy equation. 
We also obtain correspondences between certain spaces of hyperbolic ideal polygons 
and certain Grassmannian spaces, under which lambda lengths correspond to Pl\"{u}cker coordinates, illuminating the connection between Grassmannians, hyperbolic polygons, and type A cluster algebras. 
\end{abstract}

\section{Introduction}

Penrose and Rindler in \cite{Penrose_Rindler84} describe various aspects of relativity theory in terms of spinorial objects. The foundation of their spinorial description of spacetime is a construction associating objects of Minkowski space $\R^{1,3}$ to vectors $\kappa = (\xi, \eta) \in \C^2$, which they call \emph{spin vectors} and which, for present purposes, we simply call \emph{spinors}. To nonzero spinors, Penrose and Rindler associate a \emph{null flag}, which is a point $p$ on the future light cone $L^+$, together with a 2-plane tangent to $L^+$ containing $p$, which behaves in a spinorial way. See \reffig{1} (left).

On the other hand, Penner in \cite{Penner87} introduced a ``decorated Teichm\"{u}ller theory" which has since been highly developed (see e.g. \cite{Penner12_book}). A basic construction of this theory relates points on the future light cone $L^+$ in Minkowski space of one lower dimension $\R^{1,2}$, to \emph{horocycles} in the hyperbolic plane $\hyp^2$, which sits in Minkowski space as the hyperboloid model, consisting of all points 1 unit in the future of the origin. See \reffig{1} (right).

\begin{figure}[h]
\begin{center}
\begin{tikzpicture}
  \draw[black] (3.75,1.5) ellipse (2cm and 0.3cm);
  \fill[white] (2.75,0.5)--(4.75,0.5)--(4.75,0.72)--(2.75,0.72);
  \draw[black] (2.75,0.5)--(3.25,0);
  \draw[black] (2.75,0.5)--(1.75,1.5);
  \draw[black] (4.25,0)--(4.75,0.5);
  \draw[black] (4.75,0.5)--(5.75,1.5);
  \draw[black] (3.25,0)--(3.75,-0.5)--(4.25,0.0);
  \draw[red] (3.75,-0.5)--(4,0);
  \draw[red] (4,0)--(4.1875,0.375);
  \draw[red] (4.1375,0.275)--(4.475,0.95)--(4.675,0.75)--(4.275,0.55);
  \node[black] at (1.5,1.5){$L^+$};
  \fill[red] (4.475,0.95) circle (0.055cm);
\begin{scope}[xshift=10cm,yshift=-1cm,scale=0.8]
  \draw[black] (-4,4)--(0,0)--(4,4);
  \draw[blue, dashed, thick] plot[variable=\t,samples=1000,domain=-75.5:75.5] ({tan(\t)},{sec(\t)});
  \draw[black] (0,4) ellipse (4cm and 0.4cm);
  \draw[blue, dotted, thick] (-0.2,3.7) .. controls (-1,0.25) .. (1.8,4.27);
  \draw[blue] (0,4) ellipse (3.85cm and 0.3cm);
  \draw[red] (0,0)--(2,3);
  \fill[red] (2,3) circle (0.1cm); 
  \node[black] at (-3.5,3){$L^+$};
  \node[blue] at (-0.75,2.5){$H$};
  \node[blue] at (-2.25,3){$\hyp^2$};
\end{scope}
\end{tikzpicture}
\caption{Left: null flag corresponding to a spinor. Right: point on light cone (red), with corresponding horosphere $H$ in $\hyp^2$ (blue).}
\label{Fig:1}
\end{center}
\end{figure}
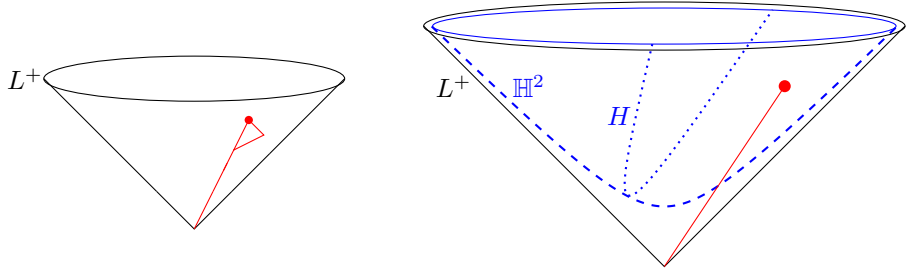

In this paper we observe that these two constructions can be combined and generalised, yielding the following theorem.
\begin{theorem}
\label{Thm:main_thm_1}
There is a smooth, bijective, $SL(2,\C)$-equivariant correspondence between nonzero spinors and spin-decorated horospheres in hyperbolic 3-space $\hyp^3$.
\end{theorem}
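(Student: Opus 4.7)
The plan is to construct the map by factoring through Penrose--Rindler's spinor-to-null-flag construction in $\R^{1,3}$, then applying the higher-dimensional analogue of Penner's construction to turn a null flag into a spin-decorated horosphere in $\hyp^3$. Bijectivity will then follow by checking that the two ``sign ambiguities'' --- the sign of $\kappa$ on the spinor side, and the $\Z/2$ data of a spin decoration on the horosphere side --- are matched by the construction.

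First, I would recall the Penrose--Rindler map. Identify Minkowski space $\R^{1,3}$ with the space of $2 \times 2$ Hermitian matrices, with Minkowski inner product $-\det$. A nonzero spinor $\kappa = (\xi,\eta)^T \in \C^2$ is sent to the rank-one positive semi-definite Hermitian matrix
\[
\kappa \kappa^* = \begin{pmatrix} |\xi|^2 & \xi \bar\eta \\ \bar\xi \eta & |\eta|^2 \end{pmatrix},
\]
which lies on $L^+$ since its determinant vanishes and its trace is positive. Completing to a null flag adds a $2$-plane tangent to $L^+$ at $\kappa\kappa^*$ that encodes the phase of $\kappa$; the resulting map to null flags is $2$-to-$1$, with fibre $\{\pm \kappa\}$.

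Second, I would lift Penner's correspondence one dimension higher. With $\hyp^3$ realized as the future hyperboloid $\{v \in \R^{1,3} : \langle v, v\rangle = -1, \, v_0 > 0\}$, each $p \in L^+$ determines a horosphere $H_p = \{v \in \hyp^3 : \langle v, p\rangle = -1\}$, and $p \mapsto H_p$ is a smooth bijection from $L^+$ onto the space of horospheres in $\hyp^3$. The null $2$-plane of the flag at $p$ restricts to a tangent direction along $H_p$; lifting this direction through the spin double cover of the tangent circle of $H_p$ produces the spin decoration, and the $\Z/2$ ambiguity of the lift is pinned down by the sign of $\kappa$. For the verifications: smoothness is immediate from the explicit polynomial formulas in $\xi, \eta$; and for $SL(2,\C)$-equivariance, $A \in SL(2,\C)$ acts on spinors by $\kappa \mapsto A\kappa$, on Hermitian matrices by $H \mapsto A H A^*$ (this is the spin double cover $SL(2,\C) \to SO^+(1,3)$), and on $\hyp^3$ as the corresponding orientation-preserving isometry, and the three actions commute with the maps by construction. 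Bijectivity then follows: the spinor-to-null-flag map has fibre $\{\pm \kappa\}$, and restoring this $\Z/2$ via the spin decoration gives a bijection.

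The main obstacle I anticipate is making the notion of ``spin decoration'' on a horosphere sufficiently precise and showing that the sign of $\kappa$ really corresponds to a genuine spin lift rather than merely an unsigned tangent direction. Concretely, this amounts to identifying the spin structure on the relevant frame bundle over $H_p$, viewed inside $\hyp^3$, with the restriction of the spin double cover $SL(2,\C) \to \Isom^+(\hyp^3)$; once this identification is nailed down and shown to be compatible with the Penrose--Rindler flag data, $SL(2,\C)$-equivariance forces all the pieces to fit together and both smoothness and bijectivity drop out.
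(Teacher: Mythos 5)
Your proposal follows essentially the same route as the paper: the Penrose--Rindler spinor-to-null-flag map (smooth, $2$-to-$1$ with fibre $\{\pm\kappa\}$), composed with the $3$-dimensional version of Penner's construction turning a flag into a horosphere with a parallel oriented direction field, with the residual sign ambiguity absorbed by passing to spin lifts, and with smoothness and $SL(2,\C)$-equivariance checked from the explicit formulas and standard actions. The ``main obstacle'' you flag is resolved in the paper exactly along the lines you suggest: since $\C_*^2 \cong S^3 \times \R$ is simply connected and both the flag space and the space of decorated horospheres have fundamental group $\Z/2$, the $2$-to-$1$ map and the flag-to-decoration diffeomorphism lift to diffeomorphisms onto the spin double covers (with spin frames identified with $SL(2,\C)$ and spin-decorated horospheres with $SL(2,\C)/P$), equivariance surviving because $SL(2,\C)$ is simply connected.
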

A \emph{decoration} on a horosphere is a tangent parallel oriented line field, i.e. a choice of direction along the horosphere at each point which is invariant under parallel translation. Such decorations exist since horospheres in $\hyp^3$ are isometric to the Euclidean plane. A \emph{spin decoration} is, roughly, a ``spin lift" of such a decoration, where rotating the direction by $2\pi$ is not the identity, but rotation by $4\pi$ is; we define them precisely in \refsec{spin_decorations}. 
See \reffig{2} (left).

The correspondence of \refthm{main_thm_1} is expressed very simply in the upper half space model $\U^3$ of $\hyp^3$. 
(Because we use several models of hyperbolic geometry, we denote them distinctly: $\hyp^n, \U^n, \Disc^n$ denote the hyperboloid, upper half plane/space, and conformal ball/disc models of hyperbolic $n$-space respectively. When our considerations are independent of model we simply use $\hyp^n$.)
Regarding the sphere at infinity $\partial \U^3$ of $\U^3$ as $\C \cup \{\infty\}$ in the usual way, the horosphere $H$ corresponding to $(\xi, \eta)$ has centre at $\xi/\eta$. If $\xi/\eta = \infty$ then $H$ is a horizontal plane in $\U^3$ at height $|\xi|^2$, and its decoration points in the direction $i\xi^2$. If $\xi/\eta \in \C$ then $H$ is a Euclidean sphere in $\U^3$ of Euclidean diameter $|\eta|^{-2}$, and its decoration at its highest point (``north pole") points in the direction $i \eta^{-2}$. See \reffig{2} (right). We prove this in \refprop{horospheres_explicitly}.

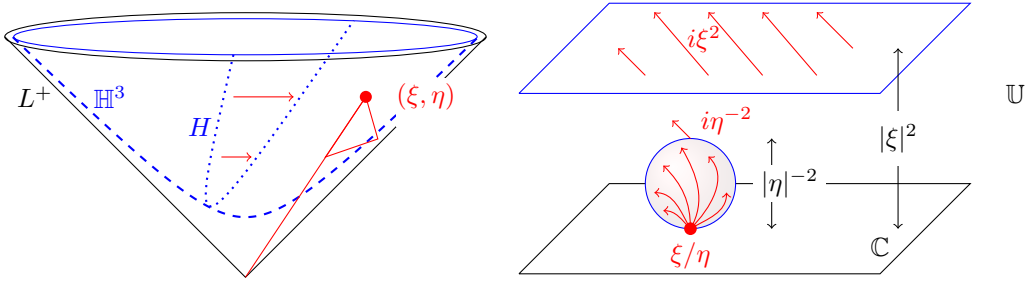
\begin{figure}[h]
\begin{center}
\begin{tabular}{cc}
\begin{tikzpicture}[scale=0.8]
  \draw[black] (-4,4)--(0,0)--(4,4);
  \draw[blue, dashed, thick] plot[variable=\t,samples=1000,domain=-75.5:75.5] ({tan(\t)},{sec(\t)});
  \draw[black] (0,4) ellipse (4cm and 0.4cm);
  \draw[blue, dotted, thick] (-0.2,3.7) .. controls (-1,0.25) .. (1.8,4.27);
  \draw[blue] (0,4) ellipse (3.85cm and 0.3cm);
  \draw[red] (0,0)--(2,3);
  \fill[red] (2,3) circle (0.1cm); 
  \node[black] at (-3.5,3){$L^+$};
	\fill[white](2.5,2.5)--(3.5,2.5)--(3.5,3.3)--(2.5,3.3)--cycle;
  \node[red] at (3,3){$(\xi,\eta)$};
  \draw[red] (2,3)--(2.2,2.3)--(1.33,2)--(2,3);
  \node[blue] at (-0.75,2.5){$H$};
  \node[blue] at (-2.25,3){$\hyp^3$};
  \draw[red, ->] (-0.2,3)--(0.8,3);
  \draw[red, ->] (-0.4,2)--(0.1,2);
\end{tikzpicture}
&
\begin{tikzpicture}[scale=1.2]
    \draw[black] (-2,-0.5)--(2,-0.5)--(3,0.5)--(-1,0.5)--(-2,-0.5);
    \fill[white] (-0.1,0.5) circle (0.5cm);
    \shade[ball color = red!40, opacity = 0.1] (-0.1,0.5) circle (0.5cm);
    \draw[blue] (-0.1,0.5) circle (0.5cm);
		\fill[red] (-0.1,0) circle (0.07cm);
    \draw[->, red] (-0.1,0) to[out=135,in=0] (-0.4,0.2);
    \draw[->, red] (-0.1,0) to[out=120,in=0] (-0.5,0.4);
    \draw[->, red] (-0.1,0) to[out=90,in=-45] (-0.4,0.7);
    \draw[->, red] (-0.1,0) to[out=60,in=-60] (-0.2,0.9);
    \draw[->, red] (-0.1,0) to[out=45,in=-45] (0.1,0.8);
    \draw[->, red] (-0.1,0) to[out=30,in=-90] (0.3,0.4);
		\draw[red, ->] (-0.1,1)--(-0.3,1.2);
		\node[red] at (0.3,1.2) {$i \eta^{-2}$};
		\node[red] at (-0.1,-0.3) {$\xi/\eta$};
		\draw[<->] (0.8,0)--(0.8,1);
		\fill[white] (0.6,0.3)--(1.4,0.3)--(1.4,0.7)--(0.6,0.7)--cycle;
		\node[black] at (1,0.5) {$|\eta|^{-2}$};
    \draw[blue] (-2,1.5)--(2,1.5)--(3,2.5)--(-1,2.5)--(-2,1.5);
    \begin{scope}[xshift=0.5cm]
		\draw[red,->] (-1.1,1.7)--(-1.4,2);
    \draw[red,->] (-0.4,1.7)--(-1,2.4);
    \draw[red,->] (0.2,1.7)--(-0.4,2.4);
    \draw[red,->] (0.8,1.7)--(0.2,2.4);
    \draw[red,->] (1.2,2)--(0.8,2.4);
		\node[red] at (-0.45,2.1) {$i \xi^2$};
		\end{scope}
		\draw[<->] (2.2,0)--(2.2,2);
		\fill[white] (1.8,0.7)--(2.6,0.7)--(2.6,1.3)--(1.8,1.3)--cycle;
		\node[black] at (2.2,1) {$|\xi|^2$};
		\node[black] at (3.5,1.5) {$\U$};
		\node[black] at (2,-0.2) {$\C$};
		\end{tikzpicture}
\end{tabular}
\caption{Left: Null flag corresponding to $(\xi, \eta)$, and corresponding horosphere. Right: decorated horospheres as they appear in the upper half space model $\U$.}
\label{Fig:2}
\end{center}
\end{figure}

As indicated, both sides of this correspondence have $SL(2,\C)$-actions. The action on $\C^2$ is by the standard action of matrices on vectors. The action on horospheres is via the action of $PSL(2,\C)$ as orientation-preserving isometries of $\hyp^3$; this action lifts to $SL(2,\C)$ to preserve spin structures.

Penrose and Rindler in \cite{Penrose_Rindler84} define a complex-valued antisymmetric bilinear form $\{ \cdot, \cdot \}$ on spinors, given by the $2 \times 2$ determinant; it can also be regarded as the standard complex symplectic form on $\C^2$. On the other hand, given two horospheres $H_1, H_2$, we may measure the signed distance $\rho$ from one to the other along their mutually perpendicular geodesic. As detailed in \refsec{spin_decorations}, we may also measure an angle $\theta$ between spin decorations, which is well defined modulo $4\pi$. We regard $d = \rho + i \theta$ as a \emph{complex distance} between the two horospheres. See \reffig{3}. 
\begin{figure}[h]
\def\svgwidth{0.38\columnwidth}
\begin{center}
\begingroup%
  \makeatletter%
  \providecommand\color[2][]{%
    \errmessage{(Inkscape) Color is used for the text in Inkscape, but the package 'color.sty' is not loaded}%
    \renewcommand\color[2][]{}%
  }%
  \providecommand\transparent[1]{%
    \errmessage{(Inkscape) Transparency is used (non-zero) for the text in Inkscape, but the package 'transparent.sty' is not loaded}%
    \renewcommand\transparent[1]{}%
  }%
  \providecommand\rotatebox[2]{#2}%
  \newcommand*\fsize{\dimexpr\f@size pt\relax}%
  \newcommand*\lineheight[1]{\fontsize{\fsize}{#1\fsize}\selectfont}%
  \ifx\svgwidth\undefined%
    \setlength{\unitlength}{247.70801514bp}%
    \ifx\svgscale\undefined%
      \relax%
    \else%
      \setlength{\unitlength}{\unitlength * \real{\svgscale}}%
    \fi%
  \else%
    \setlength{\unitlength}{\svgwidth}%
  \fi%
  \global\let\svgwidth\undefined%
  \global\let\svgscale\undefined%
  \makeatother%
  \begin{picture}(1,0.64212724)%
    \lineheight{1}%
    \setlength\tabcolsep{0pt}%
    \put(0,0){\includegraphics[width=\unitlength,page=1]{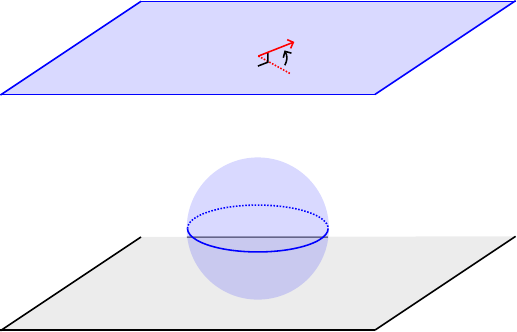}}%
    \put(0.77703916,0.58512719){\color[rgb]{0,0,0}\makebox(0,0)[lt]{\lineheight{1.25}\smash{\begin{tabular}[t]{l}$H_1$\end{tabular}}}}%
    \put(0,0){\includegraphics[width=\unitlength,page=2]{complex_lambda_lengths.pdf}}%
    \put(0.45375586,0.10699279){\color[rgb]{0,0,0}\makebox(0,0)[lt]{\lineheight{1.25}\smash{\begin{tabular}[t]{l}$H_2$\end{tabular}}}}%
    \put(0,0){\includegraphics[width=\unitlength,page=3]{complex_lambda_lengths.pdf}}%
    \put(0.39200427,0.4027793){\color[rgb]{0,0,0}\makebox(0,0)[lt]{\lineheight{1.25}\smash{\begin{tabular}[t]{l}$\rho$\end{tabular}}}}%
    \put(0.57579002,0.51343701){\color[rgb]{0,0,0}\makebox(0,0)[lt]{\lineheight{1.25}\smash{\begin{tabular}[t]{l}$\theta$\end{tabular}}}}%
    \put(0,0){\includegraphics[width=\unitlength,page=4]{complex_lambda_lengths.pdf}}%
  \end{picture}%
\endgroup%
 
\caption{Complex distance between horospheres.}
\label{Fig:3}
\end{center}
\end{figure}

The correspondence of \refthm{main_thm_1} extends further to relate these structures, as follows.
\begin{theorem}
\label{Thm:main_thm_2}
Given two spinors $\kappa_1, \kappa_2$, suppose the corresponding spin-decorated horospheres have complex distance $d$. Then
\[
\{\kappa_1, \kappa_2\} = \exp \left( \frac{d}{2} \right).
\]
\end{theorem}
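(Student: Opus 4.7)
My approach is to exploit the $SL(2,\C)$-equivariance of both sides to reduce the claim to a single standard configuration, then verify the identity by direct computation in the upper half-space model using \refprop{horospheres_explicitly}.

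First, I would check that both sides are $SL(2,\C)$-invariant functions of $(\kappa_1,\kappa_2)$. The bilinear form $\{\cdot,\cdot\}$ is the $2\times 2$ determinant, i.e.\ the standard complex symplectic form, on which $SL(2,\C)$ acts trivially. On the other side, \refthm{main_thm_1} identifies the $SL(2,\C)$-action on spin-decorated horospheres with a lift of the isometric action on $\hyp^3$; the perpendicular distance $\rho$ is manifestly an isometry invariant, and the angle $\theta$ between the spin decorations (measured after parallel transport along the common perpendicular) is invariant mod $4\pi$ precisely because the lift is by spin isometries rather than merely by orientation-preserving ones. Thus $d = \rho + i\theta$ is $SL(2,\C)$-invariant modulo $4\pi i$, which is exactly the indeterminacy that makes $\exp(d/2)$ well defined.

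Assuming the generic case $c := \{\kappa_1,\kappa_2\} \neq 0$, I would use this invariance to put the pair in normal form. Pick $g \in SL(2,\C)$ with $g\kappa_1 = (1,0)$; then $g\kappa_2 = (\xi,c)$ for some $\xi$. Acting by $\left(\begin{smallmatrix} 1 & -\xi/c \\ 0 & 1 \end{smallmatrix}\right)$, which stabilises $(1,0)$, kills $\xi$ and reduces us to $\kappa_1 = (1,0)$, $\kappa_2 = (0,c)$. Then \refprop{horospheres_explicitly} identifies $H_1$ with the horizontal plane at height $1$ in $\U$ with decoration direction $i$, and $H_2$ with the Euclidean sphere of diameter $|c|^{-2}$ tangent to $\C$ at $0$, whose decoration at its north pole $(0,0,|c|^{-2})$ points in direction $ic^{-2}$. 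The common perpendicular is the vertical geodesic from $0$ to $\infty$; the signed hyperbolic distance along it is $\rho = 2\log|c|$, which has the correct sign (negative precisely when $|c|<1$, i.e.\ when the horospheres overlap). Parallel transport of the decoration $i$ of $H_1$ down the vertical geodesic preserves its direction, and its angle to $ic^{-2}$ at the north pole of $H_2$ is $2\arg c$ with the paper's orientation convention, lifted to a well-defined element of $\R/4\pi\Z$ by the spin structure. Hence $d = 2\log|c| + 2i\arg c$ modulo $4\pi i$, so $\exp(d/2) = |c|\, e^{i\arg c} = c = \{\kappa_1,\kappa_2\}$.

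The main obstacle will be the spin and orientation bookkeeping: verifying the mod-$4\pi$ invariance of $\theta$ under the $SL(2,\C)$-lift, and pinning down sign conventions carefully so that the angle reads as $+2\arg c$ (matching $\log c$) rather than $-2\arg c$ (which would give $\log\bar c$), and similarly so that $\rho = +2\log|c|$ with the orientation implicit in the definition of \emph{signed} distance from $H_1$ to $H_2$. The degenerate case $c=0$ (proportional spinors, concentric horospheres) is not literally covered since $\exp(d/2)$ is never zero, but follows from the generic case by a limiting argument as $\rho \to -\infty$.
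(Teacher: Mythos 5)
Your overall strategy --- reduce by $SL(2,\C)$-invariance of both sides to the normal form $\kappa_1=(1,0)$, $\kappa_2=(0,c)$ with $c=\{\kappa_1,\kappa_2\}$, then read off the geometry in the upper half-space model via \refprop{horospheres_explicitly} --- is exactly the paper's. The genuine gap is at the step you yourself flag as ``the main obstacle'': the determination of $\theta$ modulo $4\pi$ rather than modulo $2\pi$. The complex distance is defined between specific spin frames, namely the inward spin frame $W_1^{in}(p_1)$ of $H_1$ and the outward spin frame $W_2^{out}(p_2)$ of $H_2$, and these are tied together by the deliberately chosen convention of \refdef{associated_spin_decorations} (inward and outward lifts differ by rotations of $+\pi$ versus $-\pi$ about the decoration vector). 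Comparing the decoration directions $i$ and $i c^{-2}$ along the vertical geodesic, as you do, only determines $\theta$ mod $2\pi$, hence $\exp(d/2)$ only up to sign; whether one gets $c$ or $-c$ is precisely what the spin lift and the sign conventions must settle, and it cannot simply be asserted --- with the opposite convention in \refdef{associated_spin_decorations} the theorem would acquire a sign. As written, your argument establishes $\{\kappa_1,\kappa_2\}=\pm\exp(d/2)$.

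The paper closes this gap by explicit lift computations. For $\kappa_1=(1,0)$, $\kappa_2=(0,1)$ it realises $A=\begin{bmatrix}0&-1\\1&0\end{bmatrix}$ as the lift of the path $M_t=\pm\begin{bmatrix}\cos t&-\sin t\\ \sin t&\cos t\end{bmatrix}$, $t\in[0,\pi/2]$, of rotations about the geodesic from $-i$ to $i$, tracks that $W_1^{in}(p)$ is carried to $W_2^{in}(p)$ by a rotation of $\pi$ about that axis, and then the $-\pi$ rotation of the convention gives $W_1^{in}(p)=W_2^{out}(p)$ exactly, not merely up to $2\pi$; for $(1,0),(0,D)$ it lifts a path of diagonal matrices to pin the rotation down as $2\arg D$ mod $4\pi$. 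Some such bookkeeping with lifted paths (or an equivalent spin-frame computation) is required to turn your sketch into a proof of the stated equality. The degenerate case is a non-issue: the paper's convention sets the lambda length to $0$ for concentric horospheres, and the theorem as stated presupposes the complex distance exists, so no limiting argument is needed.
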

In the 2-dimensional context, the distance between horocycles is just a real number $d$, and the quantity $\lambda = \exp(d/2)$ is known as a \emph{lambda length} \cite{Penner87}. Thus, the standard bilinear form on spinors computes lambda lengths between corresponding spin-decorated horospheres.

{\flushleft \textbf{Example.} }
Take 
$\kappa_1 = (1,0)$ and $\kappa_2 = (0,1)$.
In the upper half space model, 
$\kappa_1$
corresponds to a horosphere centred at $1/0 = \infty$, so appears as a horizontal plane, at height $1$ and with decoration in the direction $i$. Similarly, the horosphere of 
$\kappa_2$
has centre $0/1 = 0$, appearing as a sphere centred at $0$, with Euclidean diameter $1$, and direction at its highest point in the direction $i$. These two horospheres are tangent at the point $(0,0,1)$, where their decorations align. It turns out their spin directions also align, so $\rho = \theta = 0$ and hence $\lambda = 1 = 
\{\kappa_1, \kappa_2\}$.

After multiplying 
$\kappa_1$
by a complex number $re^{i\phi}$, with $r>0$ and $\phi$ real, its corresponding horosphere remains centred at $\infty$, but the horizontal plane moves to height $r^2$, translated upwards from its original position by hyperbolic distance $2 \log r$, and its decoration is rotated by $2 \phi$. From 
$\kappa_1$ to $\kappa_2$
we then have $\rho = 2 \log r$ and $\theta = 2 \phi$, so $\lambda = \exp(\frac{1}{2}(2 \log r + 2 \phi i)) = re^{i\phi} = 
\{\kappa_1, \kappa_2\}$.

{\flushleft \textbf{Approach.} }
This paper proceeds through the constructions and proofs in a self-contained manner; we need to adapt and extend the constructions of Penrose--Rindler and Penner for our purposes. To give a rough overview, from a spinor $\kappa = (\xi, \eta)$, following Penrose--Rindler, we obtain points $(T,X,Y,Z)$ on the positive light cone $L^+$ in $\R^{1,3}$ via
\begin{equation}
\label{Eqn:basic_correspondence}
\begin{pmatrix} \xi \\ \eta \end{pmatrix}
\begin{pmatrix} \overline{\xi} & \overline{\eta} \end{pmatrix}
=
\frac{1}{2} \begin{pmatrix} T+Z & X+iY \\ X-iY & T-Z \end{pmatrix}
\end{equation}
which uses the correspondence between Hermitian $2 \times 2$ matrices and Minkowski space $\R^{1,3}$ given by Pauli matrices. The flag of $\kappa$ has flagpole along the ray of $(T,X,Y,Z)$ determined by \refeqn{basic_correspondence}, and 2-plane defined by the derivative of this map $\kappa \mapsto (T,X,Y,Z)$ in a certain direction depending on $\kappa$; we show this is a variation on the Penrose--Rindler construction. The correspondence between spinors and flags is $SL(2,\C)$-equivariant, where $SL(2,\C)$ acts on flags via its action on $\R^{1,3}$ in standard fashion as $SO(1,3)^+$. In \refsec{spin_vectors_to_Minkowski} we describe these constructions precisely, along with explicit calculations, and details of $SL(2,\C)$-equivariance, which we have not seen proved elsewhere in the literature.

From a point $p$ on the figure light cone $L^+$, Penner's construction in \cite{Penner87} is to consider the affine plane in Minkowski space consisting of all $x$ satisfying 
\[
\langle x, p \rangle = 1,
\]
where $\langle \cdot, \cdot \rangle$ is the standard Lorentzian metric. This affine plane intersects the hyperboloid model of hyperbolic space $\hyp^3$ in a horosphere $H$. This construction works in any dimension, and in $\R^{1,3}$, we obtain an affine 3-plane which intersects the hyperboloid model of $\hyp^3$ in a 2-dimensional horosphere. We show that when $p$ arises from a spinor $\kappa$, this affine 3-plane contains an affine 2-plane parallel to the flag 2-plane of $\kappa$, and this affine 2-plane intersects the horosphere in a parallel oriented line field on $H$, yielding the picture of \reffig{2} (left). Again, all constructions are $SL(2,\C)$-equivariant. Precise details are given in \refsec{Minkowski_horospheres}. 

Finally, all these constructions lift, in appropriate sense, to spin double covers, and remain $SL(2,\C)$-equivariant, as we detail in \refsec{spin_decorations}.

Strictly speaking, \refthm{main_thm_2} could be obtained more quickly, by directly defining the decorated horospheres associated to spinors in the manner of \refprop{horospheres_explicitly}. However, our approach places these results in the original context of Minkowski and spin geometry, as developed by Penrose and Rindler, illustrating various connections and applications.

{\flushleft \textbf{Hyperbolic geometry applications.} }
These theorems have several applications and in this paper we consider some of them. In \refsec{hyperbolic_applications} we consider some applications to hyperbolic geometry. 

Consider an ideal hyperbolic tetrahedron, i.e. with all vertices on $\partial \hyp^3$. Number the vertices $0,1,2,3$. 
We consider a spin decoration on the tetrahedron, consisting of a spin-decorated horosphere $H_i$ at each ideal vertex $i$. There is then a complex lambda length $\lambda_{ij}$ from $H_i$ to $H_j$. Each $\lambda_{ij}$ measures, in a certain sense, the distance between horospheres along each edge, along with the angle between them. See \reffig{TetrLabels}. 

\begin{theorem}
\label{Thm:main_thm_3}
The complex lambda lengths $\lambda_{ij}$ in a spin-decorated ideal tetrahedron satisfy
\[
\lambda_{01} \lambda_{23} + \lambda_{03} \lambda_{12} = \lambda_{02} \lambda_{13}.
\]
\end{theorem}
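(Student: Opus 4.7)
The plan is to reduce the Ptolemy equation directly to a Plücker-type determinantal identity on $\C^2$, using the established correspondence.

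First, applying \refthm{main_thm_1}, I would pick spinors $\kappa_0, \kappa_1, \kappa_2, \kappa_3 \in \C^2 \setminus \{0\}$ corresponding to the four spin-decorated horospheres $H_0, H_1, H_2, H_3$. Writing $\kappa_i = (\xi_i, \eta_i)$, \refthm{main_thm_2} identifies each complex lambda length as the antisymmetric bilinear form on spinors:
\[
\lambda_{ij} = \exp\!\left(\frac{d_{ij}}{2}\right) = \{\kappa_i, \kappa_j\} = \xi_i \eta_j - \xi_j \eta_i.
\]
So each $\lambda_{ij}$ is a $2 \times 2$ determinant built from the coordinates of $\kappa_i$ and $\kappa_j$.

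The theorem then reduces to the claim that for any four vectors $\kappa_0, \ldots, \kappa_3 \in \C^2$,
\[
\{\kappa_0, \kappa_1\}\{\kappa_2, \kappa_3\} + \{\kappa_0, \kappa_3\}\{\kappa_1, \kappa_2\} = \{\kappa_0, \kappa_2\}\{\kappa_1, \kappa_3\}.
\]
This is the classical three-term Plücker relation for $2\times 2$ minors (equivalently, a Desnanot--Jacobi identity), and it may be verified in one line by expanding both sides as polynomials in $\xi_i, \eta_j$: the four monomials $\xi_0 \xi_2 \eta_1 \eta_3$ and $\xi_1 \xi_3 \eta_0 \eta_2$ that appear on the left-hand side cancel in pairs, leaving exactly the four monomials of the right-hand side.

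There is no substantial obstacle; the result is essentially immediate from \refthm{main_thm_2}. The only content is noting that the bilinear form $\{\cdot,\cdot\}$ is the standard symplectic/determinant form on $\C^2$, and that four vectors in a $2$-dimensional space necessarily satisfy a single quadratic relation among their pairwise determinants, namely the Plücker relation above. If one wishes to emphasize the geometric content, one can also remark (before or after the algebraic proof) that this shows the hyperbolic Ptolemy identity is the shadow of the $\Gr(2,4)$ Plücker relation under the spinor--horosphere correspondence, anticipating the Grassmannian discussion promised later in the paper.
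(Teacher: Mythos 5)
Your proposal is correct and follows essentially the same route as the paper: apply Theorems \ref{Thm:main_thm_1} and \ref{Thm:main_thm_2} to replace each $\lambda_{ij}$ by $\{\kappa_i,\kappa_j\}$, i.e.\ a $2\times 2$ minor of the $2\times 4$ matrix of spinors, and then invoke the three-term Pl\"ucker relation (which you verify by direct expansion, while the paper simply cites it as well known). No gaps; the argument matches the paper's proof.
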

This equation is similar to Ptolemy's theorem relating the lengths of sides and diagonals of a cyclic quadrilaterals in classical Euclidean geometry, hence we call it a \emph{Ptolemy equation}. Penner in \cite{Penner87} proved a corresponding Ptolemy equation in 2 dimensions: when an ideal quadrilateral in $\hyp^2$ has its vertices decorated with horocycles, the (real) lambda lengths of the edges and diagonals satisfy the same equation. \refthm{main_thm_3} is a 3-dimensional generalisation showing that Ptolemy's equation still holds, once we take horospheres to have spin decorations, and take lambda lengths to be complex. Roughly, 2-dimensional hyperbolic geometry corresponds to the case when spinors are \emph{real}, i.e. lie in $\R^2$.

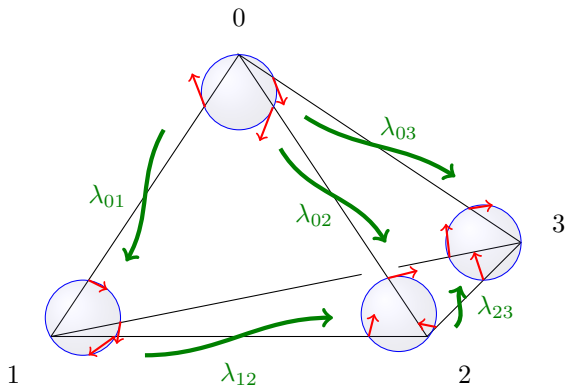
\begin{figure}[h]
\begin{center}
    \begin{tikzpicture}[scale=2.5]
    \draw (-1,0)--(1.5,0.5);
		    \fill[white] (0.75,0.35) circle (0.1 cm);
    \draw (0,1.5)--(-1,0)--(1,0)--(0,1.5)--(1.5,0.5)--(1,0);
    \draw[blue] (-0.83,0.1) circle (0.2);
    \draw[blue] (0.85,0.12) circle (0.2);
    \draw[blue] (0,1.3) circle (0.2);
    \draw[blue] (1.3,0.5) circle (0.2);
    \shade[ball color = blue!40, opacity = 0.1] (-0.83,0.1) circle (0.2cm);
    \shade[ball color = blue!40, opacity = 0.1] (0.85,0.12) circle (0.2cm);
    \shade[ball color = blue!40, opacity = 0.1] (0,1.3) circle (0.2cm);
    \shade[ball color = blue!40, opacity = 0.1] (1.3,0.5) circle (0.2cm);
		\draw [red, thick, ->] (1.05,0.05) -- (0.95,0.07);
		\draw [red, thick, ->] (0.79,0.31) -- (0.95,0.35);
		\draw [red, thick, ->] (0.69,0) -- (0.72,0.12);
		\draw [red, thick, ->] (0.18,1.38) -- (0.24,1.23);
		\draw [red, thick, ->] (0.18,1.22) -- (0.11,1.04);
		\draw [red, thick, ->] (-0.18,1.22) -- (-0.25,1.4);
		\draw [red, thick, ->] (1.22,0.68) -- (1.35,0.7);
		\draw [red, thick, ->] (1.3,0.3) -- (1.25, 0.45);
		\draw [red, thick, ->] (1.12,0.42) -- (1.1, 0.6);
		\draw [red, thick, ->] (-0.63,0.08) -- (-0.65,-0.04);
		\draw [red, thick, ->] (-0.8,0.3) -- (-0.7,0.25);
		\draw [red, thick, ->] (-0.66,0) -- (-0.8,-0.1);
		\draw [black!50!green, ultra thick, ->] (-0.5,-0.1) to [out=0, in=180] (0.5,0.1);
		\draw [black!50!green] (0,-0.2) node {$\lambda_{12}$};
		\draw [black!50!green, ultra thick, ->] (-0.4,1.1) to [out=240, in=60] (-0.6,0.4);
		\draw [black!50!green] (-0.7,0.75) node {$\lambda_{01}$};
		\draw [black!50!green, ultra thick, ->] (0.22,1) to [out=-60, in=120] (0.78,0.5);
		\draw [black!50!green] (0.4,0.65) node {$\lambda_{02}$};
		\draw [black!50!green, ultra thick, ->] (1.15,0.05) to [out=45, in=250] (1.18,0.27);
		\draw [black!50!green] (1.365,0.16) node {$\lambda_{23}$};
		\draw [black!50!green, ultra thick, ->] (0.35,1.17) to [out=-33, in=147] (1.15,0.85);
		\draw [black!50!green] (0.85,1.11) node {$\lambda_{03}$};
		\draw (0,1.7) node {$0$};
		\draw (-1.2,-0.2) node {$1$};
		\draw (1.2,-0.2) node {$2$};
		\draw (1.7,0.6) node {$3$};
		
\end{tikzpicture}
  \caption{Decorated horospheres and complex lambda lengths along the edges of an ideal tetrahedron.}
  \label{Fig:TetrLabels}
\end{center}
\end{figure}

With the previous theorems in hand, the proof of \refthm{main_thm_3} is not difficult. Indeed, the four spin-decorated horospheres correspond to four spinors in $\C^2$, which can be arranged into a $2 \times 4$ matrix. The Ptolemy equation is then just the Pl\"{u}cker relation between $2 \times 2$ determinants of a $2 \times 4$ matrix. Alternatively, it can be seen as the relation
\[
\varepsilon_{AB} \varepsilon_{CD} + \varepsilon_{BC} \varepsilon_{AD} + \varepsilon_{CA} \varepsilon_{BC} = 0
\]
satisfied by the spinor $\varepsilon$, as in \cite{Penrose_Rindler84} (e.g. eq. 2.5.21).

A related result is given in \refprop{shape_parameters}, where we show that the shape parameters of an ideal tetrahedron can be recovered from these six lambda lengths.

Truncations of ideal tetrahedra along horospheres arise naturally, for instance, in complete hyperbolic structures on 3-manifolds. In a forthcoming paper with Purcell \cite{Mathews_Purcell_Ptolemy} we show how Ptolemy equations can be used to describe hyperbolic structures on 3-manifolds, giving a directly hyperbolic-geometric version of the Ptolemy equations described by Garoufalidis--Thurston--Zickert \cite{GTZ15} and enhanced Ptolemy variety of Zickert \cite{Zickert16}, in turn based on work of Fock--Goncharov \cite{Fock_Goncharov06}.

Even in 2 dimensions, spinors provide a useful way to analyse the geometry of horocycles; we take the spinors to have real coordinates. In forthcoming work with Zymaris we apply this to circle packing theory and generalise a classical theorem of Descartes \cite{Mathews_Zymaris}.

Indeed, when $\xi$ and $\eta$ are both \emph{integers} that are relatively prime, then the horocycles obtained in the upper half plane model of $\hyp^2$ are the \emph{Ford circles}, with their delightful relationships to Farey fractions, Diophantine approximation and continued fractions \cite{Ford_1938}.

{\flushleft \textbf{Cluster algebra applications.} }
In \refsec{cluster_applications} we consider some applications to cluster algebras. We refer to \cite{Williams14} for an introduction to basic notions of cluster algebras.

We have already mentioned how 4 spinors arising from a spin-decorated ideal tetrahedron $\Delta$ can be arranged into a $2 \times 4$ matrix. Considering those 4 spinors up to the common action of $SL(2,\C)$ corresponds to considering such a $\Delta$ up to isometry. And considering appropriate $2 \times 4$ matrices up to a left action by $2 \times 2$ matrices is a standard description of a \emph{Grassmannian}. Thus, the correspondences of the above theorems yield a relationship between hyperbolic geometry and Grassmannians.

In \cite[sec. 12.2]{Fomin_Zelevinsky03}, Fomin--Zelevinsky described a geometric realisation of cluster algebras of type $A_n$ in terms of Grassmannians. Clusters in this case are in bijection with triangulations of an $(n+3)$-gon; two clusters are joined by an edge in the exchange graph if and only if the triangulations are related by a flip \cite{Chapoton_Fomin_Zelevinsky_02, Fomin_Zelevinsky_03_Y-systems}. Fomin--Zelevinsky showed that the cluster algebra is realised by $X(n+3)$, the affine cone over the Grassmannian $\Gr(2,n+3)$ in particular, the cluster algebra there denoted $\mathcal{A}_\circ$ in type $A_n$ is isomorphic to the $\Z$-form of the coordinate ring $\C[X(n+3)]$, with the cluster variables mapping to Pl\"{u}cker coordinates. This has since been generalised in various ways, for instance to other Grassmannians \cite{Scott06} and partial flag varieties \cite{Geiss_Leclerc_Schroer_08}.

On the other hand, work of Fock--Goncharov \cite{Fock_Goncharov06}, Gekhtman--Shapiro--Vainshtein \cite{Gekhtman_Shapiro_Vainshtein_05}, Fomin--Shapiro--Thurston \cite{Fomin_Shapiro_Thurston08} and Fomin--Thurston \cite{Fomin_Thurston18} provides geometric realisations of cluster algebras arising from surfaces, in terms of the decorated Teichm\"{u}ller space $\widetilde{T}(n+3)$ introduced by Penner \cite{Penner87}, using lambda lengths. In the particular case of an $(n+3)$-gon, the lambda lengths of the diagonals provide cluster variables for the cluster algebra of type $A_n$ \cite[examples 8.10, 16.1]{Fomin_Thurston18}. Fock--Goncharov in \cite{Fock_Goncharov06} also give numerous results relating Teichm\"{u}ller spaces to higher algebraic structures, but as far as they relate to hyperbolic geometry and the results of this paper, they are in dimension 2. As mentioned earlier, the Ptolemy equations of Garoufalidis--Thurston--Zickert \cite{GTZ15}, which use variables provided by Fock--Goncharov, are given a 3-dimensional hyperbolic-geometric interpretation by our complex lambda lengths in forthcoming work with Purcell \cite{Mathews_Purcell_Ptolemy} .

In any case, there is thus a well-understood isomorphism between the cluster algebras arising from the affine cone $X(n+3)$ over the Grassmannian $\Gr(2,n+3)$, and from the decorated Teichm\"{u}ler space $\widetilde{T}(n+3)$. In \cite[remark 16.2]{Fomin_Thurston18} Fomin--Thurston note a connection between the underlying spaces.

The results of this paper further illuminate the situation, by giving a direct identification of the spaces underlying these cluster algebras, and extending them to 3 dimensions. The correspondence between spinors and spin-decorated horospheres naturally yields identifications of certain decorated Teichm\"{u}ller spaces, and certain Grassmannian spaces, as follows.

\begin{theorem}
\label{Thm:T_Gr}
\label{Thm:2d_T_Gr}
\label{Thm:3d_T_Gr}
Let $d \geq 3$. The correspondence of \refthm{main_thm_1} yields the following identifications.
\begin{enumerate}
\item
The decorated Teichm\"{u}ller space $\widetilde{T}(d)$ of ideal $d$-gons in $\hyp^2$ is identified with the affine cone $X^+ (n)$ on the positive Grassmannian $\Gr^+ (2,d)$. 
\item
The decorated Teichm\"{u}ller space of ideal skew $n$-gons in $\hyp^3$ is identified with the affine cone $X^* (d)$ on the subvariety of the complex Grassmannian $\Gr (2,d)$ where all Pl\"{u}cker coordinates are nonzero. 
\end{enumerate}
Under each identification, lambda lengths correspond to Pl\"{u}cker coordinates.
\end{theorem}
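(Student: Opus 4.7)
The strategy is to unwind the explicit bijection of \refthm{main_thm_1} into the language of $2 \times d$ matrices modulo $SL(2,\C)$, and then read off the two cases as imposing (ii) non-degeneracy of all pairs of columns, and (i) reality together with total positivity of $2\times 2$ minors.

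By \refthm{main_thm_1}, together with its $SL(2,\C)$-equivariance, a $d$-tuple of spin-decorated horospheres in $\hyp^3$, considered up to orientation-preserving isometry with a chosen spin lift, is exactly the same data as a $d$-tuple of nonzero spinors $(\kappa_1,\ldots,\kappa_d) \in (\C^2 \setminus \{0\})^d$ modulo the diagonal $SL(2,\C)$ action. Arranging the $\kappa_i$ as the columns of a matrix $M = [\kappa_1 \mid \cdots \mid \kappa_d] \in \Mat_{2 \times d}(\C)$, this is precisely a $2 \times d$ complex matrix with nonzero columns modulo left multiplication by $SL(2,\C)$. By \refthm{main_thm_2}, the lambda length satisfies
\[
\lambda_{ij} \;=\; \{\kappa_i,\kappa_j\} \;=\; \det[\kappa_i \mid \kappa_j] \;=\; p_{ij}(M),
\]
so lambda lengths \emph{are} Plücker coordinates of $M$ in each form of the theorem.

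For part (ii), a skew ideal $d$-gon in $\hyp^3$ is one whose $d$ ideal vertices are pairwise distinct on $\partial \hyp^3$, which is equivalent to $\kappa_i$ and $\kappa_j$ being linearly independent for every $i \neq j$, and hence to all Plücker coordinates $p_{ij}(M)$ being nonzero. The moduli of such $M$ under the left $SL(2,\C)$ action is, by definition, the affine cone $X^*(d)$ on the open subvariety of $\Gr(2,d)$ cut out by this condition. For part (i), restricting \refthm{main_thm_1} to real spinors yields a bijection with spin-decorated horocycles in a totally geodesic $\hyp^2 \subset \hyp^3$, and a decorated ideal $d$-gon in $\hyp^2$ with cyclically ordered vertices becomes a real $2 \times d$ matrix modulo $SL(2,\R)$. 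Penner's positivity of lambda lengths on a convex ideal polygon \cite{Penner87} translates into the condition that all $p_{ij}(M)$ with $i<j$ are positive, which is the defining condition of the positive Grassmannian $\Gr^+(2,d)$ and hence of its affine cone $X^+(d)$. Conversely, any totally positive real matrix yields, via the inverse of \refthm{main_thm_1}, a convex decorated ideal polygon in the correct cyclic order.

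The main obstacle is carefully matching two conventions. First, the quotient on the matrix side must be by $SL(2,\C)$, not by $PSL(2,\C)$ or $GL(2,\C)$: this is exactly where spin structures are essential, since $-I \in SL(2,\C)$ acts nontrivially on both spinors and spin-decorated horospheres, so that the $SL(2,\C)$-equivariance of \refthm{main_thm_1} descends to the quotient identification on the nose rather than up to a residual $\pm 1$ ambiguity. Second, the cyclic ordering of polygon vertices must be matched with the indexing convention of the positive Grassmannian and the sign convention for $\det[\kappa_i \mid \kappa_j]$; this is a single explicit normalisation, verified on one standard convex configuration using the upper half space formulas recorded after \refthm{main_thm_1} together with \refthm{main_thm_2}. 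Once these conventions are fixed, both identifications and the correspondence of lambda lengths with Plücker coordinates follow directly from the two main theorems.
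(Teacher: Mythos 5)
Your overall architecture matches the paper's: place the spinors as columns of a $2\times d$ matrix, quotient by $SL(2,\R)$ or $SL(2,\C)$, and use \refthm{main_thm_2} to equate lambda lengths with Pl\"{u}cker coordinates; your part (ii) is essentially the paper's argument. The genuine gap is in part (i), at exactly the step the paper isolates as \reflem{spin-coherent_ideal_d-gons}. You write that ``Penner's positivity of lambda lengths on a convex ideal polygon translates into the condition that all $p_{ij}(M)$ with $i<j$ are positive,'' but this justification does not work: Penner's real lambda lengths are exponentials of real distances, hence positive for \emph{any} configuration of decorated horocycles, ordered or not, so positivity of the geometric lambda lengths by itself carries no information about cyclic order. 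The actual content concerns the signs of the determinants $\det(\kappa_i,\kappa_j)$, which depend on the choice of the two real spin lifts $\pm\kappa_i$ at each vertex (\reflem{real_spinors_planar_decorations}). One must prove both that a decorated ideal $d$-gon (vertices in cyclic order around $\partial\hyp^2$) admits a sign choice making $\{\kappa_i,\kappa_j\}>0$ for all $i<j$, and conversely that total positivity of a real tuple forces the cyclic ordering \refeqn{cyclic_ordering_all_real} or \refeqn{cyclic_ordering_including_infinity}. You assert both directions (the converse only in your final sentence) but prove neither, and neither follows from ``a single explicit normalisation verified on one standard configuration'': the paper establishes them by a sign analysis of the $\eta_i$ over all triples $i<j<k$, with a separate treatment of a vertex at $\infty$, and also shows that exactly the two tuples $\pm(\kappa_1,\ldots,\kappa_d)$ realise a given decorated $d$-gon, so that the $2$--$1$ ambiguity is absorbed by $-I\in SL(2,\R)$ (\refprop{Td_totally_positive_spinors}). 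Without this lemma your map in part (i) is not even well defined as a map onto decorated ideal $d$-gons, let alone a bijection.

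A smaller issue: you identify $X^+(d)$ and $X^*(d)$ with the $SL$-quotients of the relevant matrix spaces ``by definition.'' In the paper these are defined as affine cones on the $GL^+(2,\R)$- resp.\ $GL(2,\C)$-quotients, and identifying the cone with the $SL$-quotient requires the short argument via decomposable elements of $\Lambda^2(\R^d)$ or $\Lambda^2(\C^d)$ (the determinant scaling of the wedge of the two rows); either adopt the $SL$-quotient as your definition explicitly or supply that argument.
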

In \refsec{cluster_applications} we define all notions precisely and prove some properties about them, including these theorems. 

The rough idea is simply that a collection of $n$ spinors $\kappa_1, \ldots, \kappa_n$ describes the $n$ ideal vertices of an ideal $n$-gon (in 2 dimensions), or an ideal skew $n$-gon (in 3 dimensions); but on the other hand, we may place the $\kappa_i$ as the $n$ columns of a $2 \times n$ matrix. The appropriate decorated Teichm\"{u}ller space is then given by the certain (spin) isometry classes of such ideal (skew) $n$-gons, which is an orbit space of a $d$-tuple of spinors $(\kappa_1, \ldots, \kappa_d)$. The corresponding set of orbits of $2 \times n$ matrices gives the affine cone on the appropriate Grassmannian.

It is not difficult to vary the conditions on $d$-gons or Grassmannians, and find identifications between diverse versions of decorated Teichm\"{u}ller spaces, and corresponding diverse Grassmannian spaces.

The appearance of the \emph{positive} Grassmannian here corresponds to the fact, proved in \refprop{Td_totally_positive_spinors}, that a sequence of horocycles with lambda lengths that are \emph{positive}, in an appropriate sense, corresponds to the the centres of the horocycles being in order around $\partial \hyp^2$. Positivity of determinants and Grassmannians and their relationship to ordered or convex objects arises in a similar way in the physics of scattering amplitudes, see e.g. \cite{ABCGPT16}.

{\flushleft \textbf{Other related work.} }
In addition to the literature already mentioned, notions related to the present paper have been discussed in several different contexts.

In \cite{FKST_23}, Felikson--Karpenkov--Serhiyenko--Tumarkin also consider an extension of lambda lengths to a 3-dimensional context, associating to an unordered pair of horospheres a nonnegative-real-valued lambda length, which is the modulus of our lambda length. They also show that their lambda lengths are given by (moduli of) determinants, using fractions over the Eisenstein integers, and give several relations satisfied by their lambda lengths. Given five horospheres, the lambda lengths from one horosphere to the four others satisfy a quartic equation \cite[theorem 4.12]{FKST_23}, which can be regarded as a corollary of the Soddy--Gosset theorem in 3 dimensions; and the 10 lambda lengths involved satisfy a degree-10 polynomial \cite[theorem 4.17]{FKST_23}. This degree-10 polynomial can be found as the necessarily zero determinant of a $5 \times 5$ matrix with $(m,n)$ entry $|z_m - z_n|^2$, where $z_n \in \C$ is the centre of the $n$th horosphere; these entries can be regarded as their lambda lengths squared. In a similar way, our Ptolemy equation, up to sign ambiguities, can be recovered as the zero determinant of the $4 \times 4$ matrix with $(m,n)$ entry $(z_m - z_n)^2$, which is the lambda length squared between spin-decorated horospheres with spinors $(z_n,1)$.\footnote{I am indebted to an anonymous referee for pointing out this reference and this argument.} 

In \cite{Springborn_17}, Springborn gives a real, 2-dimensional version of Theorems 1--3, associating to each nonzero pair $(p,q) \in \R^2$ the horocycle in the upper half space model of $\hyp^2$ centred at $p/q$ with Euclidean diameter $1/q^2$, showing this association is equivariant and that the signed hyperbolic distance between an (unordered) pair of horospheres is given by twice the logarithm of the associated determinant. Springborn considers in detail the Ford circles and Farey tessellation associated to relatively prime $(p,q)$, relating them to Diophantine approximation and Markov’s theorem. 

In \cite{McShane_Sergiescu}, McShane--Sergiescu consider real, 2-dimensional lambda lengths between Ford circles and show they are given by the absolute value of the associated determinant. They use these and related ideas to give a proof of Fermat's sum of two squares theorem. In \cite{McShane_Eisenstein} McShane uses similar ideas in the 2-dimensional context, extended to the Eisenstein integers, to prove further number-theoretic results.

Gelfand--MacPherson in 1982 \cite{Gelfand_MacPherson_82} considered various aspects of polytopes in Grassmannians. As part of this work, they noted that the configuration space $C_n^m$ of $(m+n)$ points in $\R P^{m-1}$ is diffeomorphic to the quotient of $\tilde{G}_n^m$, 
the subspace of the Grassmannian of $n$-planes in $\R^{m+n}$ consisting of planes intersecting each coordinate $m$-plane only at the origin, by the group $H \subset PGL(m+n)$ of diagonal matrices. Their $C_n^2$ thus consists of configurations of $n$ points in $\R P^1 \cong \partial \hyp^2$, which is similar to our space of ideal $d$-gons, except that the ideal vertices of our $d$-gons are required to be in order around $\partial \hyp^2$. They also consider ``enhanced" projective configurations, which form a $2^{m+n-1}$-sheeted covering space of $C_n^m$. These correspond to the possible choices of planar spin decorations (\refdef{planar_spin_decoration}) on a horocycle centred at each point, up to an overall choice of sign.
In \cite{MOST_14}, Morier-Genoud--Ovsienko--Schwartz--Tabachnikov consider this correspondence further, and the relationships between the objects involved and linear difference equations and frieze patterns. In particular, in \cite[sec. 8.4]{MOST_14} they consider relations to Teichm\"{u}ller theory, including cross-ratios and a notion of alternating perimeter length, resulting in similar expressions as we obtain for tetrahedron shape parameters in \refprop{shape_parameters}.

{\flushleft \textbf{Acknowledgments.} }
The author thanks Varsha for assistance in the preparation of figures.
He also thanks the anonymous referees for their thoughtful comments and suggestions, and for bringing to his attention several related works.
He is supported by Australian Research Council grant DP210103136.

\section{Spinors to Hermitian matrices and Minkowski space}
\label{Sec:spin_vectors_to_Minkowski}

For us \emph{spinors} are just elements of $\C^2$, which we regard as a complex symplectic vector space, with complex symplectic form denoted 
\[
\{ \cdot, \cdot \} = d\xi \wedge d\eta
\]
following \cite{Penrose_Rindler84}. We denote spinors by $\kappa = (\xi, \eta)$ or similar.
 Given $\kappa = (\xi, \eta)$ and $\kappa' = (\xi', \eta')$ then
\[
\{ \kappa, \kappa' \} = \xi \eta' - \eta \xi' = \det \begin{pmatrix} \xi & \xi' \\ \eta & \eta' \end{pmatrix}.
\]
We write $\det(\kappa, \kappa')$ for the above determinant. We denote by
$\C_*^2$ 
the set of nonzero spinors.

For the purposes of linear algebra, we regard $\kappa$ as a column vector and write $\kappa^T$ for the corresponding row vector. The adjoint $\kappa^* = \overline{\kappa}^T$ is then a row vector.

We map spinors into the set $\HH$ of Hermitian $2 \times 2$ matrices, or equivalently into Minkowski space $\R^{1,3}$. We take $\R^{1,3}$ to have coordinates $(T,X,Y,Z)$ and metric $dT^2 - dX^2 - dY^2 - dZ^2$, denoted $\langle \cdot, \cdot \rangle$. We observe $\HH$ and $\R^{1,3}$ are isomorphic 4-dimensional real vector spaces and we identify them in a standard way (perhaps the constant is slightly unorthodox)
\[
(T,X,Y,Z) \leftrightarrow \frac{1}{2} \begin{bmatrix} T+Z & X+iY \\ X-iY & T-Z \end{bmatrix}.
\]
The right hand expression is $\frac{1}{2} \left( T + X \sigma_X + Y \sigma_Y + Z \sigma_Z \right)$, where the $\sigma_\bullet$ are the Pauli matrices.
If a point $x = (T,X,Y,Z) \in \R^{1,3}$ corresponds to $S \in \HH$ then we observe $\Tr S = T$ and $4 \det S = \langle x, x \rangle$. The light cone $L = \{ x \in \R^{1,3} \; \mid \; \langle x,x \rangle = 0 \}$ corresponds to $S$ with determinant zero, and the future light cone $L^+ = \{ x \in L \; \mid \; T > 0 \}$ corresponds to $S$ satisfying $\det S = 0$ and $\Tr S > 0$. We define the \emph{celestial sphere} $\S^+$ to be the intersection of $L^+$ with the 3-plane $T=1$.

\begin{defn}[\cite{Penrose_Rindler84}]
\label{Def:phi_1}
The map $\phi_1$ from $\C^2$ to $\HH \cong \R^{1,3}$ is defined by $\phi_1 (\kappa) = \kappa \kappa^*$.
\end{defn}
In other words,
\[
\phi_1 (\kappa) = \begin{pmatrix} \xi \\ \eta \end{pmatrix} \begin{pmatrix} \overline{\xi} & \overline{\eta} \end{pmatrix} = \begin{pmatrix} |\xi|^2 & \xi \overline{\eta} \\ \overline{\xi} \eta & |\eta|^2 \end{pmatrix}.
\]
We observe that the image of $\phi_1$ has determinant zero, and its diagonal entries are $|\xi|^2, |\eta|^2$, so that its trace is non-negative. Indeed it is not difficult to show $\phi_1( \C_*^2 ) = L^+$. Thus $\phi_1$ maps a 4-(real)-dimensional domain onto a 3-dimensional image. The fibres are circles; it is not difficult to show that $\phi_1 (\kappa) = \phi_1 (\kappa')$ iff $\kappa = e^{i\theta} \kappa'$ for some real $\theta$. Indeed, on each 3-sphere in $\C^2$ given by $\kappa$ with $|\xi|^2 + |\eta|^2$ fixed at some constant $c>0$, $\phi_1$ restricts to the Hopf fibration onto the 2-sphere in $L^+$ given by $T = c$. Thus $\phi_1$ is the cone on the Hopf fibration.

In order not to lose information, we extend $\phi_1$ to a map including tangent data. Given a tangent vector $\nu$ in the real tangent space $T_\kappa \C_*^2$, we write $D_\kappa \phi_1(\nu)$ for the derivative of $\phi_1$ at $\kappa$ in the direction $\nu$. Since, for real $t$, 
\[
\phi_1 \left( \kappa + t \nu \right) = \left( \kappa + t \nu \right) \left( \kappa + t \nu \right)^* = \kappa \kappa^* + \left( \kappa \nu^* + \nu \kappa^* \right) t + \nu \nu^* t^2
\]
we have
\begin{equation}
\label{Eqn:derivative_phi1}
D_\kappa \phi_1 (\nu) = \left. \frac{d}{dt} \phi_1 \left( \kappa + t \nu \right) \right|_{t=0} = \kappa \nu^* + \nu \kappa^*.
\end{equation}
In the abstract index notation of \cite{Penrose_Rindler84}, this directional derivative is $\kappa^A \overline{\nu}^{A'} + \nu^A \overline{\kappa}^{A'}$.
At each point $\kappa$ we will build a flag structure using the derivative in a certain direction $\ZZ(\kappa)$. 

\begin{defn}
\label{Def:Z}
The function $\ZZ \colon \C^2 \To \C^2$ is given by $\ZZ(\xi, \eta) = (i \overline{\eta}, -i \overline{\xi})$. In other words,
\[
\ZZ \kappa = J \overline{\kappa} \quad \text{where} \quad
J = \begin{bmatrix} 0 & i \\ -i & 0 \end{bmatrix}.
\]
\end{defn}

Let us attempt to motivate this definition. Penrose--Rindler use a spinor $\tau^A$ forming a spin frame, or standard symplectic basis, with $\kappa^A$, i.e. so that $\{ \kappa, \tau \} = \kappa_A \tau^A = 1$. They then form a 2-plane defined by the bivector $K^a \wedge L^b$ where $K^a = \kappa^A \overline{\kappa}^{A'}$ and $L^a = \kappa^A \overline{\tau}^{A'} + \tau^A \overline{\kappa}^{A'}$. These two vectors are our $\phi_1 (\kappa)$ and $D_\kappa \phi_1 (\tau)$. But the same oriented 2-plane is obtained using any positive multiple of such $\tau$, so we could equally fix $\kappa_A \tau^A$ simply to be positive real. Choosing $\tau$ to make $\kappa_A \tau^A$ negative real, or positive/negative imaginary, works also for our purposes. Our choice of $\ZZ$ ensures $\{ \kappa, \ZZ(\kappa) \} = -i(|\xi|^2 + |\eta|^2)$ is negative imaginary. Though somewhat arbitrary, this works well for our purposes.

Another perspective on $\ZZ$ is obtained by identifying $(\xi, \eta) \in \C^2$ with the quaternion $\xi + \eta j$. Then $\ZZ \kappa = -k \kappa$. On the $S^3$ centred at the origin in $\C^2$ through $\kappa$, the tangent space at $\kappa$ has basis $i \kappa, j \kappa, k \kappa$. In the $i \kappa$ direction lies the fibre $e^{i\theta} \kappa$, and $\phi_1$ is constant; $\ZZ \kappa$ is another tangent vector to this $S^3$. 

In any case, \refeqn{derivative_phi1} and \refdef{Z} immediately yield
\begin{equation}
\label{Eqn:DkappaZkappa}
D_\kappa \phi_1 ( \ZZ \kappa ) = \kappa \kappa^T J + J \overline{\kappa} \kappa^*.
\end{equation}

We now define the type of flag structure we need.
\begin{defn}
An \emph{oriented flag} of signature $(d_1, \ldots, d_k)$ in a real vector space $V$ is an increasing sequence of subspaces
\[
\{0\} = V_0 \subset V_1 \subset \cdots \subset V_k
\]
where $\dim V_i = d_i$, and for $i=1, \ldots, k$, the quotient $V_{i}/V_{i-1}$ is endowed with an orientation.
\end{defn}

\begin{defn}
A \emph{pointed oriented null flag}, or just \emph{flag}, consists of a point $p \in L^+$ and an oriented flag $\{0\} \subset V_1 \subset V_2$ in $\HH \cong \R^{1,3}$ of signature $(1,2)$, such that
\begin{enumerate}
\item $V_1 = \R p$ and the orientation on $V_1$ is towards the future (i.e. from $0$ towards $p$),
\item $V_2$ is a tangent plane to $L^+$.
\end{enumerate}
The set of flags is denoted $\F$.
\end{defn}
Thus $p$ is 
on a
flagpole $\R p$, which runs towards the future along the light cone; and the flag plane $V_2$ is a tangent plane to the light cone, with its relative orientation equivalent to choosing the half-plane to one side of $\R p$ or the other. Note that $V_2$ contains no timelike vectors, and $\R p$ generates the unique 1-dimensional lightlike subspace of $V_2$. The tangent space to $L^+$ at $p$ is defined by the equation $\langle x, p \rangle = 0$, i.e. is the (Minkowski-)orthogonal complement $p^\perp$. Thus $\R p \subset V_2 \subset p^\perp$.

Given linearly independent $p \in L^+$ and $v \in T_p L^+$, we denote by $[[p,v]]$ the flag given by $p$, the line $\R p$ oriented from the origin towards $p$, the plane $V_2$ spanned by $p$ and $v$, and the orientation on $V_2/\R p$ induced by $v$. We observe that two flags so given $[[p,v]]$, $[[p',v']]$ are equal if and only if $p=p'$ 
and there exist real $a,b,c$ such that $ap+bv+cv' = 0$, where $b,c$ (which are necessarily nonzero) have opposite sign.

Note that $\F$ is diffeomorphic to $UTS^2 \times \R$, where $UTS^2$ is the unit tangent bundle of $S^2$: a point of $S^2$ describes a future-oriented ray in $L^+$, a unit tangent vector there describes a relatively oriented 2-plane, and the $\R$ factor fixes $p$ along the ray. Since $UTS^2 \cong \RP^3$ we also have $\F \cong \RP^3 \times \R$

Our version of Penrose--Rindler null flags can now be defined as the following map, upgrading $\phi_1$.
\begin{defn}
The map $\Phi_1$ maps nonzero spinors to (pointed oriented null) flags via
\[
\Phi_1 \colon \C_*^2 \To \F, \quad
\Phi_1 (\kappa) = [[ \phi_1 (\kappa), D_\kappa \phi_1 (\ZZ \kappa) ]].
\]
\end{defn}
Thus the point $\phi_1 (\kappa)$ yields the flagpole, and the derivative of $\phi_1$ in the $\ZZ \kappa$ direction yields the relatively oriented flag plane. We verify that $D_\kappa \phi_1 (\ZZ \kappa)$ is (real-)linearly independent from $\phi_1 (\kappa)$ using \refeqn{DkappaZkappa}: if $a \kappa \kappa^* + b \left( \kappa \kappa^T J + J \overline{\kappa} \kappa^* \right) = 0$ for some real $a,b$ then $\kappa \left( a \kappa^* + b \kappa^T J \right) = \left( J \overline{\kappa} \right) \left( - b \kappa^* \right)$; both sides of this equation being the product of a $2 \times 1$ and $1 \times 2$ matrix, the corresponding matrices must be proportional, say $\kappa = c J \overline{\kappa}$ for some real $c$; in components then $\xi = c i \overline{\eta}$ and $\eta = -c i \overline{\xi}$, so $\xi = -c^2 \xi$ and $\eta = -c^2 \eta$, so that $\xi = \eta = 0$, a contradiction.

\begin{lem}
\label{Lem:when_flags_equal}
For two spinors $\kappa, \nu \in \C_*^2$, the following are equivalent:
\begin{enumerate}
\item
$\{ \kappa, \nu \}$ is negative imaginary (just like $\{ \kappa, \ZZ \kappa \}$);
\item
$\nu = a \kappa + b \ZZ \kappa$ where $a$ is complex and $b$ is real positive;
\item
$[[\phi_1 (\kappa), D_\kappa \phi_1 (\nu) ]] = [[ \phi_1 (\kappa), D_\kappa \phi_1 (\ZZ \kappa) ]] = \Phi_1 (\kappa).$
\end{enumerate}
\end{lem}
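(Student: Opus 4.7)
The plan is to prove the cycle (i) $\Leftrightarrow$ (ii) $\Leftrightarrow$ (iii) in two steps.

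For (i) $\Leftrightarrow$ (ii), first compute $\{\kappa, Z\kappa\} = -i(|\xi|^2 + |\eta|^2)$ directly from \refdef{Z}; this is nonzero, so $\kappa$ and $Z\kappa$ form a $\C$-basis of $\C^2$. Any $\nu \in \C_*^2$ thus has a unique expression $\nu = a\kappa + cZ\kappa$ with $a, c \in \C$. Using the $\C$-bilinearity and antisymmetry of $\{\cdot,\cdot\}$,
\[
\{\kappa, \nu\} = c\{\kappa, Z\kappa\} = -ic(|\xi|^2 + |\eta|^2),
\]
which is negative imaginary if and only if $c$ is positive real. Setting $b = c$ gives the equivalence.

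For (ii) $\Leftrightarrow$ (iii), the plan is to analyse the $\R$-linear map $D_\kappa \phi_1 \colon \C^2 \to \HH$. Three observations suffice: (a) from \refeqn{derivative_phi1} with $\nu = \kappa$ one has $D_\kappa\phi_1(\kappa) = 2\phi_1(\kappa)$; (b) with $\nu = i\kappa$ one has $D_\kappa \phi_1(i\kappa) = i\kappa\kappa^* - i\kappa\kappa^* = 0$; (c) $\ker D_\kappa \phi_1 = \R\,i\kappa$. For (c), setting $\kappa\mu^* + \mu\kappa^* = 0$ entry by entry yields $\mu_1 = it\xi$ and $\mu_2 = it\eta$ for a common real $t$, so that $\mu = it\kappa$; since the domain is $4$-real-dimensional, the image of $D_\kappa\phi_1$ has real dimension $3$. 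Then for $\nu = a\kappa + bZ\kappa$ with $a = a_1 + ia_2 \in \C$ and $b > 0$, $\R$-linearity and (a), (b) give
\[
D_\kappa\phi_1(\nu) = 2a_1\,\phi_1(\kappa) + b\,D_\kappa\phi_1(Z\kappa),
\]
and since $b > 0$ this lies on the same side of $\R\phi_1(\kappa)$ in $V_2$ as $D_\kappa\phi_1(Z\kappa)$, giving (iii). Conversely, if the flags agree, then by definition $D_\kappa\phi_1(\nu) = \alpha\phi_1(\kappa) + \beta D_\kappa \phi_1(Z\kappa)$ for some $\alpha \in \R$ and $\beta > 0$; using (a) this equals $D_\kappa\phi_1(\tfrac{\alpha}{2}\kappa + \beta Z\kappa)$, so by (c) we get $\nu - \tfrac{\alpha}{2}\kappa - \beta Z\kappa = it\kappa$ for some real $t$, whence $\nu = (\tfrac{\alpha}{2} + it)\kappa + \beta Z\kappa$, which is (ii).

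I expect the main technical obstacle to be the kernel computation (c): showing $\ker D_\kappa\phi_1 = \R\,i\kappa$ rather than merely $\supseteq$. The containment is immediate, but equality requires either a direct entry-by-entry argument on the Hermitian matrix $\kappa\mu^* + \mu\kappa^*$ or an appeal to a rank/submersion statement for $\phi_1$. Everything else (the linearity manipulations and the orientation bookkeeping around $b > 0$) is routine given the formulation of flag equality in terms of real linear combinations recalled before the lemma.
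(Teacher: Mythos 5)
Your proof is correct and follows essentially the same route as the paper: both establish (i) $\Leftrightarrow$ (ii) via the nondegeneracy of $\{\cdot,\cdot\}$ and the value $\{\kappa, Z\kappa\} = -i(|\xi|^2+|\eta|^2)$, and both handle (ii) $\Leftrightarrow$ (iii) using $D_\kappa\phi_1(\kappa) \propto \phi_1(\kappa)$, $D_\kappa\phi_1(i\kappa)=0$, and the sign of $b$ for the relative orientation. Your explicit computation that $\ker D_\kappa\phi_1 = \R\, i\kappa$ simply fills in the detail behind the paper's brief assertion that the spanning condition is \emph{equivalent} to equality of the oriented flag planes, and it is carried out correctly.
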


\begin{proof}
If $\{ \kappa, \nu \}$ is negative imaginary then $\{ \kappa, b \ZZ \kappa \} = \{ \kappa, \nu \}$ for some positive $b$, and any two vectors yielding the same value for $\{ \kappa, \cdot \}$ differ by a complex multiple of $\kappa$. This shows (i) implies (ii), and the converse is clear. 

If $\nu = a \kappa + b \ZZ \kappa$ then by real-linearity of the derivative, $D_\kappa \phi_1 (\nu) = D_\kappa \phi_1 ( a \kappa ) + b D_\kappa \phi_1 ( \ZZ \kappa )$. The derivative of $\phi_1$ in the $\kappa$ direction is proportional to $\phi_1 (\kappa)$, and the derivative  in the $i \kappa$ direction is zero (pointing along a fibre of $\phi_1$). Thus the derivatives in the $\nu$ and $\ZZ \kappa$ directions span the same plane when taken together with $\phi_1 (\kappa)$; indeed, as $b>0$, the same relatively oriented plane. In fact, this condition is equivalent to spanning the same relatively oriented plane.
\end{proof}

The spaces $\C^2$, $\HH \cong \R^{1,3}$ and $\F$ all have natural $SL(2,\C)$ actions; in all cases we denote the action of $A \in SL(2,\C)$ by a dot. An $A \in SL(2,\C)$ acts on $\C^2$ by the defining representation, $A.\kappa = A \kappa$, yielding a symplectomorphism:
\[
\{A \kappa, A \kappa'\} = \det (A \kappa, A \kappa') 
= \det A (\kappa, \kappa') 
= \det (\kappa, \kappa') = \{ \kappa, \kappa' \}
\]
since $\det A = 1$. 
The same $A$ acts on $S \in \HH$ by $A.S = ASA^*$, which in $\R^{1,3}$ yields in the standard way the linear maps of $SO(1,3)^+$, i.e. those which preserve the Minkowski metric and space and time orientation. The action on $\R^{1,3}$ induces orientation-preserving actions on $L^+$ and planes in $\R^{1,3}$, yielding an action on $\F$, so that $A.[[p,v]] = [[A.p,A.v]]$. Essentially by definition $\phi_1$ is equivariant with respect to these actions,
\[
\phi_1 (A \kappa) = A \kappa \kappa^* A^* = A \phi_1 (\kappa) A^* = A.\phi_1(\kappa),
\]
and we have an equivariance property on its derivatives
\[
A. D_\kappa \phi_1 (\nu) = D_{A \kappa} \phi_1 (A \nu)
\]
since $A.(\kappa \nu^* + \nu \kappa^*) = A \kappa \nu^* A^* + A \nu \kappa^* A^* = (A \kappa) (A \nu)^* + (A \nu) (A \kappa)^*$. We now show the equivariance property extends to $\Phi_1$; we have not seen a proof of this in the existing literature.
\begin{lem}
The map $\Phi_1$ is equivariant with respect to the $SL(2,\C)$ actions on $\C_*^2$ and $\F$.
\end{lem}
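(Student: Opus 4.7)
The plan is to chase definitions and reduce everything to \reflem{when_flags_equal} together with the symplectic invariance of $\{\cdot,\cdot\}$ under $SL(2,\C)$. Unwinding the definitions, what needs to be shown is
\[
\Phi_1(A\kappa) = [[\phi_1(A\kappa), D_{A\kappa}\phi_1(Z(A\kappa))]]
\stackrel{?}{=} [[A.\phi_1(\kappa), A.D_\kappa\phi_1(Z\kappa)]] = A.\Phi_1(\kappa).
\]
The flagpoles agree immediately from the equivariance $\phi_1(A\kappa) = A.\phi_1(\kappa)$ already established. So the task reduces to showing the two relatively oriented flag planes coincide.

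For the flag planes, I would first apply the derivative equivariance $A.D_\kappa \phi_1(\nu) = D_{A\kappa}\phi_1(A\nu)$ with $\nu = Z\kappa$, giving
\[
A.D_\kappa \phi_1(Z\kappa) = D_{A\kappa}\phi_1(A Z\kappa).
\]
Thus the right-hand flag becomes $[[\phi_1(A\kappa), D_{A\kappa}\phi_1(A Z\kappa)]]$, while $\Phi_1(A\kappa)$ uses the tangent vector $Z(A\kappa)$ at the spinor $A\kappa$. So the whole equivariance question reduces to: do the spinors $AZ\kappa$ and $Z(A\kappa)$, viewed as tangent vectors at $A\kappa$, determine the same relatively oriented flag plane via $\phi_1$?

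This is precisely the situation covered by \reflem{when_flags_equal} applied to the base spinor $A\kappa$ with $\nu = AZ\kappa$: by part (iii) versus $\Phi_1(A\kappa)$, it suffices to verify condition (i), namely that $\{A\kappa, AZ\kappa\}$ is negative imaginary. But $\{\cdot,\cdot\}$ is $SL(2,\C)$-invariant, as spelled out in the preceding discussion ($\{A\kappa, A\kappa'\} = \det A \cdot \{\kappa,\kappa'\} = \{\kappa,\kappa'\}$), so
\[
\{A\kappa, AZ\kappa\} = \{\kappa, Z\kappa\} = -i(|\xi|^2 + |\eta|^2),
\]
which is negative imaginary since $\kappa \neq 0$. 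Applying \reflem{when_flags_equal} then identifies the two relatively oriented flag planes, completing the proof.

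There is no real obstacle here beyond careful bookkeeping: the content is entirely that symplectic invariance of $\{\cdot,\cdot\}$ under $SL(2,\C)$ matches up exactly with the negative-imaginary condition characterising the Penrose--Rindler-style choice of flag-plane direction encoded by $Z$. The one point worth flagging is that $Z$ itself is \emph{not} $SL(2,\C)$-equivariant (it involves complex conjugation, while the action is complex-linear), which is why a direct appeal to functoriality fails and the lemma is needed to absorb the discrepancy between $AZ\kappa$ and $Z(A\kappa)$.
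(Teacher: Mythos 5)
Your proposal is correct and follows essentially the same route as the paper's own proof: use equivariance of $\phi_1$ and its derivative to rewrite $A.\Phi_1(\kappa)$ as $[[\phi_1(A\kappa), D_{A\kappa}\phi_1(A Z\kappa)]]$, then invoke the symplectic invariance $\{A\kappa, A Z\kappa\} = \{\kappa, Z\kappa\}$ (negative imaginary) and \reflem{when_flags_equal} to identify this flag with $\Phi_1(A\kappa)$. Your closing remark about the non-equivariance of $Z$ is a nice observation but does not alter the argument.
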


\begin{proof}
We have $\Phi_1 (\kappa) = [[ \phi_1 (\kappa), D_\kappa \phi_1 (\ZZ \kappa) ]]$ so 
\[
A.\Phi_1 (\kappa) = [[ A.\phi_1 (\kappa), A.D_\kappa \phi_1 (\ZZ \kappa) ]]
= [[ \phi_1 (A \kappa), D_{A \kappa} \phi_1 ( A (\ZZ \kappa) ) ]],
\]
by equivariance of $\phi_1$ and its derivative. Now as $A$ is symplectic, $\{ A \kappa, A (\ZZ \kappa) \} = \{ \kappa, \ZZ \kappa \}$, which is negative imaginary, so by \reflem{when_flags_equal} then $[[ \phi_1 (A \kappa), D_{A \kappa} \phi_1 (A (\ZZ \kappa) ) ]] = \Phi_1 (A \kappa)$.
\end{proof}

It is possible to express explicitly the linear dependence implied by the equality of the flags $\Phi_1 (A \kappa) = [[ \phi_1 (A \kappa), D_{A \kappa} \phi_1 ( \ZZ(A \kappa) ) ]]$ and $A.\Phi_1 (\kappa) = [[ \phi_1 (A \kappa), D_{A \kappa} \phi_1 (A ( \ZZ \kappa) ) ]]$: a direct computation verifies the (perhaps surprising) identity
\[
\left( \kappa^T J A^* A \kappa + \kappa^* A^* A J \overline{\kappa} \right)
\phi_1 (A \kappa)
+
\left( \kappa^* \kappa \right)
\left[ D_{A \kappa} \phi_1 \left( \ZZ \left( A \kappa \right) \right) \right]
-
\left( \kappa^* A^* A \kappa \right)
\left[ D_{A \kappa} \phi_1 \left( A \left( \ZZ \kappa \right) \right) \right]
=
0.
\]

We now compute $\Phi_1$ explicitly in coordinates.
\begin{lem}
\label{Lem:Phi1_calculation}
Let $\kappa = (\xi, \eta) = (a+bi, c+di)$. Then in $\R^{1,3}$ we have
\[
\phi_1 (\kappa) = \left( a^2 + b^2 + c^2 + d^2, 2(ac+bd), 2(bc-ad), a^2 + b^2 - c^2 - d^2 \right),
\]
and 
\[
D_\kappa \phi_1 (\ZZ \kappa) = 
\left( 0, 2(cd-ab), a^2 - b^2 + c^2 - d^2, 2(ad+bc) \right).
\]
\end{lem}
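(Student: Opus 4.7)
The proof is a direct computation using the definitions of $\phi_1$, $Z$, and the identification $\HH \cong \R^{1,3}$. The convention
$(T,X,Y,Z) \leftrightarrow \tfrac{1}{2}\begin{pmatrix} T+Z & X+iY \\ X-iY & T-Z \end{pmatrix}$
inverts to $T = M_{11} + M_{22}$, $Z = M_{11} - M_{22}$, $X = 2\Re M_{12}$, $Y = 2\Im M_{12}$ for a Hermitian matrix $M$, and the plan is simply to write each of the two Hermitian matrices explicitly in the real coordinates $a,b,c,d$ and read off these four quantities.

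For the first formula, I substitute $\xi = a+bi$ and $\eta = c+di$ into the four entries of
\[
\phi_1(\kappa) = \kappa\kappa^* = \begin{pmatrix} |\xi|^2 & \xi\overline{\eta} \\ \overline{\xi}\eta & |\eta|^2 \end{pmatrix}.
\]
The diagonal entries $|\xi|^2 = a^2+b^2$ and $|\eta|^2 = c^2+d^2$ give $T$ and $Z$ immediately, and expanding the single product $\xi\overline{\eta} = (a+bi)(c-di) = (ac+bd) + (bc-ad)i$ gives $X$ and $Y$.

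For the second formula, I first compute $\nu := Z\kappa = (i\overline{\eta}, -i\overline{\xi}) = (d+ci,\, -b-ai)$ from \refdef{Z}, and then apply \refeqn{derivative_phi1} to obtain $D_\kappa \phi_1(\nu) = \kappa\nu^* + \nu\kappa^*$. Since this is manifestly Hermitian (it equals $M + M^*$ for $M = \kappa\nu^*$), I need only compute the four entries of $\kappa\nu^*$ and then extract $2\Re M_{11}$, $2\Re M_{22}$, and the hermitianised off-diagonal $M_{12} + \overline{M_{21}}$. Each entry of $\kappa\nu^*$ is a single product of two complex numbers (for instance, the $(1,1)$-entry is $(a+bi)(d-ci) = (ad+bc) + (bd-ac)i$), and the four resulting expansions give the claimed coordinates.

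The main obstacle is simply bookkeeping: the factor of $i$ inside $Z\kappa$ swaps real and imaginary parts and flips signs, so it is easy to introduce sign errors when expanding and summing. I would sanity-check the second formula by verifying that its $T$-coordinate vanishes, as it must since $D_\kappa \phi_1(Z\kappa)$ is tangent to $L^+$ at $\phi_1(\kappa)$; by testing the specific case $\kappa = (1,0)$; and by checking the Minkowski-orthogonality $\langle \phi_1(\kappa), D_\kappa \phi_1(Z\kappa)\rangle = 0$, which must hold because the flag plane $V_2 \subset p^\perp$.
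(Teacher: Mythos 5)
Your method is the same as the paper's: expand the two Hermitian matrices $\kappa\kappa^*$ and $\kappa\nu^*+\nu\kappa^*$ and read off $(T,X,Y,Z)$; since $\nu=Z\kappa=J\overline{\kappa}$ and $J^*=J$, your $\kappa\nu^*+\nu\kappa^*$ is literally the paper's $\kappa\kappa^TJ+J\overline{\kappa}\kappa^*$, so there is no difference in route. One caveat: if you apply your (correct) inversion formulas $T=M_{11}+M_{22}$, $Z=M_{11}-M_{22}$, $X=2\Re M_{12}$, $Y=2\Im M_{12}$ faithfully to the second matrix, you get $\left(0,\,4(cd-ab),\,2(a^2-b^2+c^2-d^2),\,4(ad+bc)\right)$, i.e.\ exactly twice the vector stated in the lemma; for instance at $\kappa=(1,0)$ the matrix is $\begin{bmatrix}0&i\\-i&0\end{bmatrix}$, which reads off as $(0,0,2,0)$, not $(0,0,1,0)$. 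The same factor of $2$ is present in the paper's own proof (its displayed matrix has $(1,1)$ entry $i(\overline{\xi\eta}-\xi\eta)=2(ad+bc)$, whose $Z$-coordinate under the stated identification is $4(ad+bc)$), so this is a normalization slip in the statement rather than a flaw in your approach, and it is harmless downstream because only the positive ray of $D_\kappa\phi_1(Z\kappa)$ modulo $\R p$ enters the flag $\Phi_1(\kappa)$. Just don't assert that your expansions reproduce the stated coordinates on the nose; your own proposed sanity check at $\kappa=(1,0)$ would have caught the discrepancy.
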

The fact that $D_\kappa \phi_1 (\ZZ \kappa)$ has zero $T$-coordinate follows from $\ZZ \kappa$ being tangent to the $S^3$ centred at the origin through $\kappa$, which maps under $\phi_1$ to the $S^2$ given by the intersection of $L^+$ with a plane at constant $T$.

\begin{proof}
This is a straightforward computation using \refdef{phi_1}
\[
\phi_1 (\kappa) = \begin{bmatrix} \xi \overline{\xi} & \xi \overline{\eta} \\ \overline{\xi} \eta & \eta \overline{\eta} \end{bmatrix}
= \begin{bmatrix} a^2 + b^2 & (ac+bd)+(bc-ad)i \\ (ac+bd)-(bc-ad)i & c^2 + d^2 \end{bmatrix}
\]
and, via \refeqn{DkappaZkappa},
\[
\D_\kappa \phi_1 (\ZZ \kappa)
= \kappa \kappa^T J + J \overline{\kappa} \kappa^*
= \begin{bmatrix} i \left( \overline{\xi \eta} - \xi \eta \right) & i \left( \xi^2 + \overline{\eta}^2 \right) \\ i \left( \overline{\xi}^2 + \eta^2 \right) & i \left( \xi \eta - \overline{\xi \eta} \right) \end{bmatrix}.
\]
\end{proof}
We denote by $\hyp^3$ the hyperboloid model of hyperbolic 3-space
\[
\hyp^3 = \left\{ x = (T,X,Y,Z) \in \R^{1,3} \; \mid \; \langle x,x \rangle = 1, \; T>0 \right\}
\]
and by $\partial \hyp^3$ the boundary at infinity of $\hyp^3$. So $\partial \hyp^3 \cong S^2$ and $\partial \hyp^3$ is naturally bijective with the celestial sphere $\S^+$.

Indeed, projectivising $L^+$ yields 
the boundary at infinity $\partial \hyp^3 \cong S^2$  and under this projectivisation, 2-planes tangent to $L^+$ containing a ray of $L^+$ correspond bijectively with tangent lines at points of $\partial \hyp^3$. Moreover, relatively oriented planes containing a ray of $L^+$ correspond bijectively with tangent directions at points of $\partial \hyp^3$. 

The orientation-preserving isometry group $SO(1,3)^+$ of $\hyp^3$ acts transitively on the future light cone $L^+$, and indeed acts transitively on the tangent directions at points of $\partial \hyp^3$. Further, if we take an oriented flag consisting of a future-oriented line $R$ of $L^+$ and a relatively oriented 2-plane $\pi$ tangent to $L^+$, then there is an element of $SO(1,3)^+$ fixing $R$ (and its orientation) and $\pi$ (and its relative orientation), which sends any point on the ray to any other. Such an element is given by a hyperbolic translation along any geodesic with an endpoint at infinity corresponding to $R$. In other words, $SL(2,\C)$ acts transitively on $\F$, and the action factors through $PSL(2,\C) \cong SO(1,3)^+$. 

Taking $\kappa = (e^{i\theta},0)$, we have $\phi_1 (\kappa) = (1,0,0,1)$, which we denote $p_0$, and by \reflem{Phi1_calculation}, $\Phi_1 (\kappa)$ is the flag with basepoint $p_0$ and 2-plane spanned by $p_0$ and $(0,-\sin 2\theta, \cos 2\theta,0)$. 
Thus as we multiply $\kappa$ by $e^{i\theta}$ to move through a fibre of $\phi_1$, the flag $\Phi_1 (\kappa)$ rotates about a fixed pointed flagpole twice as fast.
It follows that $\Phi_1$ takes the value of each such flag exactly twice. 

Using the equivariance of $\Phi_1$ and the transitive action of $SL(2,\C)$, the same applies for the flags based at any point on $L^+$. It follows that $\Phi_1$ is smooth, surjective and 2--1. Moreover, the stabiliser of a flag in $SO(1,3)^+$ is trivial, so that $PSL(2,\C)$ acts freely and transitively on $\F$. Topologically $\Phi_1$ is a map $\C_*^2 \cong S^3 \times \R \To \RP^3 \times \R \cong \F$ which is a double cover.

\section{From Minkowski space to horospheres}
\label{Sec:Minkowski_horospheres}

We have now built the maps $\phi_1$ and $\Phi_1$ in the commutative diagram
\[
\begin{array}{ccccc}
\C_*^2 & \stackrel{\Phi_1}{\To} & \F & \stackrel{\Phi_2}{\To} & \Hor^D \\
& \stackrel{\phi_1}{\searrow} & \downarrow & & \downarrow \\
& & L^+ & \stackrel{\phi_2}{\To} & \Hor
\end{array}
\]
where the downwards arrow $\F \To L^+$ forgets all structure of a flag except its point on $L^+$. In this section we define the maps $\phi_2, \Phi_2$, and the spaces $\Hor, \Hor^D$, which involve horospheres and decorations.

Horospheres in the hyperboloid model $\hyp^3$ are given by the intersection of $\hyp^3$ with certain affine 3-planes in $\R^{1,3}$. Any affine 3-plane in $\R^{1,3}$ is given by $x \in \R^{1,3}$ satisfying an equation of the form $\langle x,n \rangle = c$, where $n$ is a (Minkowski-)normal vector to the plane and $c$ is a real constant. We call such an affine 3-plane \emph{lightlike} if its normal $n$ is lightlike. We observe that a lightlike 3-plane can be defined by an equation $\langle x, p \rangle = c$ where $p \in L^+$; if $c>0$ then this plane intersects $\hyp^3$ in a horosphere, and if $c \leq 0$ the plane is disjoint from $\hyp^3$. Normalising such equations by requiring the constant to be $1$, i.e. $\langle x,p \rangle = 1$, then gives a bijection between points $p \in L^+$ and horospheres. We denote the set of horospheres in $\hyp^3$ by $\Hor$.

\begin{defn}[\cite{Penner87}]
The map $\phi_2 \colon L^+ \To \Hor$ sends $p \in L^+$ to the horosphere defined by $\langle x, p \rangle = 1$. The map $\phi_2^\partial \colon L^+ \To \partial \hyp^3$ sends $p$ to the point at infinity of $\phi_2 (p)$.
\end{defn}
Thus the map $\phi_2$ is a bijection. Indeed, it is a diffeomorphism: $\Hor \cong S^2 \times \R$, with an $\R$-family of horospheres at each point at infinity in $S^2 = \partial \hyp^3$. Any horosphere has a unique point at infinity in $\partial \hyp^3$, which we also call its \emph{centre}. The map $\phi_2^\partial$ can be regarded as the projectivisation map $L^+ \To S^2$ or projection to the celestial sphere $L^+ \To \S^+$.

Note $SL(2,\C)$ acts naturally on $\hyp^3$ (as on $L^+$ and $\R^{1,3}$) in the standard way, via linear maps of $SO(1,3)^+$, and hence also on $\partial \hyp^3$ and $\Hor$, and we again denote all actions via a dot. We observe an $A \in SL(2,\C)$ sends the horosphere $\phi_2 (p)$, defined by $\langle x,p \rangle = 1$, to the horosphere $A.\phi_2 (p)$ defined by $\langle A^{-1} x, p \rangle = 1$. Since the action of $A$ preserves the Minkowski metric, this horosphere is also given by $\langle x, A.p \rangle = 1$. In other words, $A.\phi_2 (p) = \phi_2 (A.p)$ so that $\phi_2$ is $SL(2,\C)$-equivariant. Forgetting the horospheres and recording only points at infinity, similarly $\phi_2^\partial$ is $SL(2,\C)$-equivariant.

We now consider the intersection of a horosphere with a flag. So consider a horosphere $\phi_2 (p)$ for some $p \in L^+$, and consider a flag based at the same $p \in L^+$, given by the oriented sequence $\{0\} \subset \R p \subset V \subset p^\perp$. The horosphere $\phi_2 (p)$ is the intersection of $\hyp^3$, given by $\langle x,x \rangle = 1$, with the plane $\langle x, p \rangle = 1$; hence at a point $q \in \phi_2 (p)$, its tangent space is given by $T_q \phi_2 (p) = p^\perp \cap q^\perp$. The intersection of the horosphere with the flag plane $V$ at $q$ will thus be given by
\[
T_q \phi_2 (p) \cap V = q^\perp \cap p^\perp \cap V = q^\perp \cap V,
\]
since $V \subset p^\perp$. Now the intersection $q^\perp \cap V$ is the intersection of a spacelike 3-plane $q^\perp = T_q \hyp^3$, and the 2-plane $V$, so it is either 1- or 2-dimensional. But if it were 2-dimensional then we would have $V \subset q^\perp = T_q \hyp^3$; but $V$ contains a lightlike vector $p$, while $q^\perp = T_q \hyp^3$ is spacelike. Thus the intersection is 1-dimensional and spacelike.

Moreover, the orientation on $\R p \subset V$ is an orientation on $V/\R p$, and thus any vector in $V$ not in $\R p$ obtains an orientation, depending on the side of $\R p$ to which it lies. The intersection $T_q \phi_2 (p) \cap V = q^\perp \cap V$ is spacelike, hence not equal to $\R p$. Thus we may regard the intersection of the horosphere $\phi_2 (p)$ with the flag plane $V$ as defining an oriented line tangent to the horosphere at each point. In other words, we obtain an \emph{oriented line field} on $\phi_2 (p)$. We denote by $\Hor^L$ the set of horospheres with oriented line fields.

\begin{defn}
The map $\Phi_2 \colon \F \To \Hor^L$ sends a flag $\{0\} \subset \R p \subset V$ to the horosphere $\phi_2 (p)$, with the oriented line field defined at each point $q$ by $T_q \phi_2 (p) \cap V$.
\end{defn}
An $A \in SL(2,\C)$ acts on $\Hor^L$: linear maps in $SO(1,3)^+$ are orientation-preserving isometries of $\hyp^3$, sending horospheres to horospheres, with their derivatives sending oriented line fields to oriented line fields. Since the $SL(2,\C)$-actions on $\F$ and $\Hor^L$ are both via linear maps of $SO(1,3)^+$ acting on $\R^{1,3}$, $\Phi_2$ is $SL(2,\C)$-equivariant.

It is well known that a horosphere $H$ is isometric to a Euclidean 2-plane. The parabolic orientation-preserving isometries of $\hyp^3$ fixing $H$ act as translations on this 2-plane. This group of translations is isomorphic to the additive complex numbers. Thus, the following notion of parallelism makes sense.
\begin{defn}
An oriented line field on a horosphere $H$ is \emph{parallel} if it is invariant under Euclidean translations (i.e. under the action of all parabolic isometries fixing $H$).

A \emph{decorated horosphere} is a horosphere with a parallel oriented line field. The set of all decorated horospheres is denoted $\Hor^D$.
\end{defn}
Observe that to describe a parallel oriented line field on a horosphere, it suffices to give an oriented tangent line at one point; the rest of the oriented line field can then be found by parallel translation.

The following lemma calculates $\Phi_2$ for a simple but useful example.
\begin{lem}
\label{Lem:horosphere_line_field_calculation}
$\Phi_2 \circ \Phi_1 (1,0)$ is the horosphere $H_0$ in $\hyp^3$ which has point at infinity in the direction $p_0 = (1,0,0,1)$ along $L^+$, passing through $q_0 = (1,0,0,0)$, with the oriented parallel line field pointing in the direction $\partial_Y = (0,0,1,0)$ at $q_0$.
\end{lem}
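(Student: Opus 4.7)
The plan is a direct computation followed by a symmetry argument for the parallelism clause. First I would invoke \reflem{Phi1_calculation} with $(a,b,c,d) = (1,0,0,0)$ to read off $\phi_1(1,0) = (1,0,0,1) = p_0$ and $D_{(1,0)}\phi_1(Z(1,0)) = (0,0,1,0) = \partial_Y$. Hence $\Phi_1(1,0)$ is the flag $[[p_0, \partial_Y]]$, with flagpole $\R p_0$ and 2-plane $V = \mathrm{span}(p_0, \partial_Y)$ relatively oriented so that $\partial_Y$ represents the positive class in $V/\R p_0$.

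Next I apply $\Phi_2$. The horosphere $\phi_2(p_0)$ is defined by $\langle x, p_0 \rangle = 1$, i.e.\ $T - Z = 1$, intersected with $\hyp^3$. Direct substitution shows $q_0 = (1,0,0,0)$ satisfies both equations, so $q_0 \in \phi_2(p_0)$. To compute the oriented line field at $q_0$, I work out $T_{q_0}\phi_2(p_0) \cap V = p_0^\perp \cap q_0^\perp \cap V$. Here $p_0^\perp = \{T = Z\}$ and $q_0^\perp = \{T = 0\}$, so $p_0^\perp \cap q_0^\perp = \{T = Z = 0\}$. A generic vector of $V$ has the form $a p_0 + b\partial_Y = (a, 0, b, a)$, which lies in this subspace iff $a = 0$, yielding $\R \partial_Y$. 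Its orientation is inherited from the relative orientation of $V/\R p_0$, under which $\partial_Y$ is positive; so the oriented tangent line at $q_0$ points in the $+\partial_Y$ direction, as claimed.

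Parallelism follows by symmetry. The stabiliser of $\kappa = (1,0)$ in $SL(2,\C)$ is the unipotent subgroup $N = \bigl\{ \bigl(\begin{smallmatrix}1 & z \\ 0 & 1\end{smallmatrix}\bigr) : z \in \C \bigr\}$. By $SL(2,\C)$-equivariance of $\Phi_1$ and $\Phi_2$, each element of $N$ fixes $\Phi_2 \circ \Phi_1(1,0)$, line field and all. Under $PSL(2,\C) \cong SO(1,3)^+$, $N$ maps onto the full two-real-parameter group of parabolic isometries of $\hyp^3$ fixing $p_0$ at infinity, which acts on $H_0$ precisely as the full Euclidean translation group of the horosphere. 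Therefore the oriented line field on $H_0$ produced by $\Phi_2$ is translation-invariant, hence parallel in the sense of the previous definition.

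The main obstacle is really only bookkeeping: making sure the relative orientation on $V/\R p_0$ is transported correctly through the intersection $V \cap T_{q_0}H_0$, and recognising that the stabiliser of $(1,0)$ acts on $H_0$ as the \emph{full} translation group rather than only a subgroup (so that invariance under $N$ is genuinely parallelism). Everything else is a one-line linear algebra check in $\R^{1,3}$.
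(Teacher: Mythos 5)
Your computation of the flag $\Phi_1(1,0)=[[p_0,\partial_Y]]$ and of the line at $q_0$ via $q_0^\perp\cap V=\R\,\partial_Y$ (with its orientation from $V/\R p_0$) is exactly what the paper does, and it is correct. Where you diverge is the parallelism clause: the paper verifies it by brute force, writing out the action of the parabolic matrices $A_c\in P$ on $\R^{1,3}$, parametrising $H_0$ by the points $q_c=A_c.q_0$, computing the line field at \emph{every} point as $q_c^\perp\cap V=\R\bigl((\Im c)\,p_0+\partial_Y\bigr)$, and checking directly that $A_c$ carries the direction at $q_{c'}$ to the direction at $q_{c+c'}$. You instead deduce parallelism from symmetry: the stabiliser of $(1,0)$ in $SL(2,\C)$ is the unipotent group $P$, so by the already-established $SL(2,\C)$-equivariance of $\Phi_1$ and $\Phi_2$ the whole decorated object is $P$-invariant, and since the image of $P$ in $PSL(2,\C)$ consists precisely of the parabolic isometries fixing $H_0$ (acting as its full translation group), invariance under $P$ is parallelism in the sense of the paper's definition. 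This is a valid and cleaner argument; indeed the paper's own definition of ``parallel'' (invariance under all parabolic isometries fixing $H$) makes your reduction almost immediate, needing only the standard fact that these parabolics are exactly the image of $P$. What the paper's computational route buys in exchange is the explicit formula for the line field at every $q_c$ and the identities $A_c.p_0=p_0$, $A_c.\partial_Y=\partial_Y+(\Im c)\,p_0$, which are quoted again in the proof that $\Phi_2$ is a diffeomorphism; with your argument those would have to be recomputed there or replaced by a similar symmetry argument.
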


\begin{proof}
We have $\phi_1 (1,0) = p_0$, so that $\phi_2 \circ \phi_1 (1,0) =\phi_2 (p_0)$ is the horosphere given by $\langle x, p_0 \rangle = 1$, which is indeed the horosphere $H_0$. From \reflem{Phi1_calculation} the flag $\Phi_1 (1,0)$ is given by $[[ p_0, \partial_Y ]]$, so the flag 2-plane $V$ is spanned by $p_0$ and $\partial_Y$, with relative orientation on $V/\R p_0$ given by $\partial_Y$.

Now the parabolic subgroup 
\begin{equation}
\label{Eqn:parabolic_subgroup}
P = \left\{ \begin{bmatrix} 1 & c \\ 0 & 1 \end{bmatrix} \; \mid \; c \in \C \right\}
\end{equation}
fixes $(1,0)$ and acts simply transitively on $H_0$. Denoting by $A_c$ the matrix in $P$ with upper right entry $c \in \C$, the points of $H_0$ are parametrised by $c \in \C$; letting $q_c = A_c.q_0$ we have $H_0 = \{ q_c \; \mid \; c \in \C \}$. We calculate the action of $A_c$ on $\R^{1,3}$ to be
\[
A_c.(T,X,Y,Z) = (T',X',Y',Z')
\]
where
\[
\begin{array}{cc}
T' = T+ \Re c \, X + \Im c \, Y + \frac{|c|^2}{2}(T-Z), 
& X' = X + \Re c \, (T-Z),  \\
Y' = Y + \Im c \, (T-Z),
&Z' = Z + \Re c \, X + \Im c \, Y + \frac{|c|^2}{2}(T-Z).
\end{array}
\]
Thus we calculate
\[
q_c = A_c.q_0 =  \left( 1 + \frac{|c|^2}{2}, \; \Re c, \; \Im c, \; \frac{|c|^2}{2} \right)
\]
and moreover for $c,c' \in \C$ we have $A_c . q_{c'} = q_{c+c'}$.
At $q_c$ the line field of $\Phi_2 (p_0)$ is given by $q_c^\perp \cap V$. Now $q_c^\perp$ is the 3-plane given by equation
\begin{equation}
\label{Eqn:qcperp}
\left( 1 + \frac{|c|^2}{2} \right) T - \Re c \; X - \Im c \; Y - \frac{|c|^2}{2} Z = 0,
\end{equation}
while $V$ is spanned by $p_0$ and $\partial_Y$, hence defined by $T=Z$ and $X=0$. Thus $q_c^\perp \cap V$ is defined by $T=Z$, $X=0$ and $Z = (\Im c) \, Y$, hence spanned by $(\Im c, 0, 1, \Im c) = (\Im c) \, p_0 + \partial_Y$. Since the orientation on $V/\R p_0$ is given by $\partial_Y$, the oriented line field of $\Phi_2 \circ \Phi_1 (1,0)$ at $q_c$ is directed by $(\Im c) \, p_0 + \partial_Y$. In particular, the oriented line field of $\Phi_2 \circ \Phi_1 (1,0)$ at $q_0$ is directed by $\partial_Y$. 

Now, if we apply $A_c$ to the vector $(\Im c', 0,1, \Im c')$ directing the line field at a point $q_{c'}$ of $H_0$, we obtain the vector $(\Im (c+c'), 0, 1, \Im (c+c'))$ at $q_{c+c'}$. Thus the oriented line field is parallel.
\end{proof}
In fact in the above calculation we observe that $A_c.p_0 = p_0$ and $A_c.\partial_Y = \partial_Y + \Im c \, p_0$. This shows explicitly that the parabolic subgroup $P$ preserves the flag plane $V$, and in fact acts as the identity on both $\R p_0$ and $V/\R p_0$.

In fact this example is generic enough to give the following.
\begin{lem}
\label{Lem:Phi_2_diffeo}
The map $\Phi_2$ is a diffeomorphism $\F \To \Hor^D$. 
\end{lem}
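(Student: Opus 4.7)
The plan is to combine \reflem{horosphere_line_field_calculation} with the equivariance of $\Phi_2$ and the transitivity of the $SL(2,\C)$-action on $\F$ established at the end of the previous section, and then to write down an explicit smooth inverse.

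First I would verify that $\Phi_2$ actually lands in $\Hor^D$, not merely in $\Hor^L$. \reflem{horosphere_line_field_calculation} establishes this for the base flag $[[p_0, \partial_Y]]$. For any other flag $f \in \F$, transitivity yields some $A \in SL(2,\C)$ with $f = A.[[p_0, \partial_Y]]$, so by equivariance $\Phi_2(f) = A.\Phi_2([[p_0, \partial_Y]])$. Since $A$ acts on $\hyp^3$ by an orientation-preserving isometry, it sends parallel oriented line fields to parallel oriented line fields, so $\Phi_2(f)$ is decorated.

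Next I would exhibit a smooth inverse $\Psi \colon \Hor^D \to \F$ as follows. Given $(H, \ell) \in \Hor^D$, let $p = \phi_2^{-1}(H) \in L^+$, choose any $q \in H$, let $v \in \R^{1,3}$ represent the oriented tangent line $\ell(q)$, and set $\Psi(H, \ell) = [[p, v]]$. The crucial point is well-definedness: the 2-plane $V = \R p + \R v$ together with its relative orientation must be independent of the choice of $q \in H$. In the base case this is explicit in the proof of \reflem{horosphere_line_field_calculation}, where $\ell$ at $q_c$ is directed by $\partial_Y + (\Im c)\, p_0$, so $\R p_0 + \R \ell(q_c) = \R p_0 + \R \partial_Y$ is constant in $c$. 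The general case reduces to this by equivariance once one notes that $SL(2,\C)$ acts transitively on $\Hor^D$: it is transitive on centres in $\partial \hyp^3$, on horospheres of a given centre (via hyperbolic translations along a geodesic through that centre), and on decorations of a given horosphere (via rotations fixing the horosphere).

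Given that $\Psi$ is well-defined, its smoothness is clear, as is smoothness of $\Phi_2$ directly from its definition in terms of linear-algebraic intersections. The identities $\Phi_2 \circ \Psi = \Id_{\Hor^D}$ and $\Psi \circ \Phi_2 = \Id_\F$ then follow from the constructions: $\Phi_2 [[p,v]]$ yields at every point $q$ of its horosphere the tangent line $q^\perp \cap V$, from which $\Psi$ reconstructs $V = \R p + \R v$. The main obstacle is the well-definedness of $V$, equivalently the claim that parallel oriented line fields on $H$ are in canonical bijection with relatively oriented 2-planes through $\R p$ in $p^\perp$; but this bijection is exactly what the explicit computation at the end of the proof of \reflem{horosphere_line_field_calculation} establishes in the base case, and equivariance propagates it to $\Hor^D$ in general.
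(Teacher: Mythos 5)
Your proposal is correct, and while it uses the same main ingredients as the paper --- the base-case computation of \reflem{horosphere_line_field_calculation}, $SL(2,\C)$-equivariance, and transitivity --- it establishes bijectivity by a genuinely different mechanism. The paper argues fibrewise: it parametrises the flags based at $p_0$ by $\Phi_1(e^{i\theta},0)=[[p_0,\partial_\theta]]$, checks that as $\theta$ varies these map bijectively onto the circle of decorations of $H_0$, and then transports this to every fibre using transitivity, equivariance, and the bijectivity of $\phi_2$. You instead construct an explicit global inverse $\Psi$, whose crux is that the decoration direction at \emph{any} point $q$ of $H$ spans, together with $p=\phi_2^{-1}(H)$, one and the same relatively oriented $2$-plane; you reduce this well-definedness to the base-case identity that the line at $q_c$ is directed by $\partial_Y+(\Im c)\,p_0$, and propagate it by equivariance and transitivity of $SL(2,\C)$ on $\Hor^D$ (which you sketch correctly, and which the paper's own proof also invokes). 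Your terse final step is in fact covered by the well-definedness claim itself: since $\ell(q')\subset q'^{\perp}\cap V$ and both are lines, they coincide at every $q'$, giving $\Phi_2\circ\Psi=\Id$ pointwise without separately appealing to the fact that a parallel line field is determined by its value at one point. What your route buys is an explicit formula for the inverse (flag plane $=$ span of $p$ with the decoration direction at any point), which makes the smoothness of $\Phi_2^{-1}$ --- asserted in both arguments --- somewhat more transparent; the paper's fibrewise route avoids any choice-dependent construction and so needs no well-definedness check.
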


In other words, the oriented line field of any flag is parallel, and $\Phi_2$ provides a smooth correspondence between flags and decorated horospheres.

\begin{proof}
First we show $\Phi_2$ always yields parallel oriented line fields. \reflem{horosphere_line_field_calculation} shows this when $\Phi_2$ is applied to $\Phi_1 (1,0) \in \F$. But the action of $SL(2,\C)$ is transitive on $\F$, and the action of $SL(2,\C)$ on $\hyp^3$ (hence on horospheres) is by isometries, and $\Phi_2$ is $SL(2,\C)$-equivariant. Any flag in $\F$ is thus of the form $A.\Phi_1(1,0)$ for some $A \in SL(2,\C)$, so $\Phi_2 A. \Phi_1 (1,0) = A.\Phi_2 \Phi_1 (1,0)$, which has parallel oriented line field.

Next we show that $\Phi_2$ sends the flags of the form $[[p_0, v]]$ bijectively to decorations on $H_0$. These flags are those of the form 
\[
\Phi_1 (e^{i\theta},0) = \begin{bmatrix} e^{i\theta} & 0 \\ 0 & e^{-i\theta} \end{bmatrix}.\Phi_1 (1,0)
= [[p_0, (0, -\sin 2\theta, \cos 2\theta, 0)]],
\]
as calculated above. Denote the latter vector by $\partial_\theta$, so $\Phi_1 (e^{i\theta},0) = [[p_0, \partial_\theta]]$. Then $\Phi_2 \Phi_1 (e^{i\theta},0)$ is the horosphere $H_0$, with oriented parallel line field at $q_0$ given by the intersection of the flag 2-plane with $q_0^\perp$. Since $q_0^\perp$ is given by $T=0$ (equation (\refeqn{qcperp}) with $c=0$), which contains $\partial_\theta$, the oriented line field of $\Phi_2 \Phi_1 (e^{i\theta},0)$ at $q_0$ is directed by $\partial_\theta$. As $\theta$ increases say from $0$ to $\pi$, both the flag through $p_0$ and the decoration on $H_0$ rotate through a full $2\pi$, with $\Phi_2$ providing a bijection. 

We have already seen that $\phi_2$ provides a bijection between $L^+$ and $\Hor$; using the transitivity of $SL(2,\C)$ on $\F$ and $\Hor^D$, and equivariance of $\Phi_2$, it follows that $\Phi_2$ provides a bijection between the flags based at each $p_0 \in L^+$, and decorations on the corresponding horospheres $\phi_2 (p_0)$.

Thus $\Phi_2$ is a bijection. It and its inverse are clearly smooth, once $\F$ and $\Hor^D$ are given their natural smooth structures. We have already seen $\F\cong UTS^2 \times \R$. The space of horospheres is naturally $\Hor \cong S^2 \times \R$, and decorations can be given by unit tangent vectors to the sphere at infinity, so that $\Hor^D \cong UTS^2 \times \R$.
\end{proof}

We now consider our horospheres in the upper half space model $\U^3$ of hyperbolic 3-space, given in the usual way as
\[
\U^3 = \left\{ (x,y,z) \in \R^3 \; \mid \; z>0 \right\}
\quad \text{with metric} \quad
ds^2 = \frac{dx^2+dy^2+dz^2}{z^2}.
\]
As usual we identify the plane $z=0$ with $\C$ and $\partial \U^3$ with $\C \cup  \{  \infty  \} $, and coordinates $(x,y)$ with $x+yi \in \C$. In $\U^3$, horospheres centred at $\infty$ appear as horizontal planes; we call the $z$-coordinate of this plane the \emph{height} of the horosphere. Horospheres centred at other points appear as spheres tangent to $\C$; we call the maximum of $z$ on the sphere the \emph{Euclidean diameter} of the horosphere.

We proceed from $\hyp^3$ to $\U^3$ via the conformal ball model $\Disc^3$. We have the standard isometries given by
\begin{equation}
\label{Eqn:hyp_models_translations}
\hyp^3 \To \Disc^3, \quad (T,X,Y,Z) \mapsto \frac{1}{1+T} \left( X,Y,Z \right)
\end{equation}
and, on the boundaries,
\begin{equation}
\label{Eqn:hyp_models_translations_2}
\partial \Disc^3 \To \partial \U^3, \quad (x,y,z) \mapsto \frac{x+iy}{1-z}.
\end{equation}
where in the latter map we regard $\partial \Disc^3$ as the standard $S^2 \subset \R^3$ and $\partial \U^3$ as $\C \cup \{\infty\}$. Of course $SL(2,\C)$-actions carry through equivariantly to each model as isometries, and on $\partial \U^3 \cong \C \cup  \{  \infty  \} $ the action is via M\"{o}bius transformations in the usual way,
\[
\begin{bmatrix} \alpha & \beta \\ \gamma & \delta \end{bmatrix}.\zeta 
= \frac{\alpha \zeta + \beta}{\gamma \zeta + \delta}
\quad \text{for $\zeta \in \C$}.
\]

We now introduce some terminology to describe decorations, i.e. parallel oriented line fields, on horospheres in $\U^3$. A horosphere centred at $\infty$ is a horizontal plane parallel to $\C$, so a parallel oriented line field appears as a line field invariant under Euclidean translations, and can be described by a complex number which points in the direction of the lines. This complex number is well defined up to positive multiples and we say it \emph{specifies} the decoration. On a horosphere centred elsewhere, we can describe an oriented line field by giving a vector directing it at the point with maximum $z$-coordinate (its ``north pole"); since the tangent plane there is also parallel to $\C$, we can also describe it by a complex number, up to positive multiple. We call this a \emph{north pole specification} of a decoration.

We can now give the decorated horospheres corresponding to spinors explicitly, verifying the description in the introduction, illustrated in \reffig{2} (right).
\begin{prop}
\label{Prop:horospheres_explicitly}
The spinor $(\xi, \eta)$ maps under $\Phi_2 \circ \Phi_1$ to a decorated horosphere whose centre is at $\xi/\eta$ in the upper half space model. 
\begin{enumerate}
\item
If $\eta \neq 0$ then the horosphere has Euclidean diameter $|\eta|^{-2}$, and decoration north-pole specified by $i/\eta^2$,
\item
If $\eta = 0$, then the horosphere has height $|\xi|^2$, and decoration specified by $i \xi^2$.
\end{enumerate}
\end{prop}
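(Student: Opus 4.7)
My plan is to reduce to the specific spinor $(1,0)$ using the $SL(2,\C)$-equivariance of $\Phi_2 \circ \Phi_1$, and then obtain the general answer by applying Möbius actions on $\U$.

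First I would translate the hyperboloid-model description of $\Phi_2 \Phi_1(1, 0)$ from \reflem{horosphere_line_field_calculation} into $\U$. Via the standard isometric identifications of the models (building on \refeqn{hyp_models_translations}), the lightlike ray through $p_0 = (1, 0, 0, 1)$ corresponds to $\infty \in \partial \U$, the point $q_0 = (1, 0, 0, 0) \in \hyp^3$ corresponds to $(0, 0, 1) \in \U$, and the tangent direction $\partial_Y$ pushes forward to the horizontal direction $i$. So in $\U$, $\Phi_2 \Phi_1(1, 0)$ is the horizontal plane at height $1$ with parallel decoration in the direction $i$, matching the claim for $\xi = 1, \eta = 0$.

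For a general $\kappa = (\xi, \eta)$, I would pick any $A \in SL(2,\C)$ with $A \cdot (1, 0) = \kappa$; then $\Phi_2 \Phi_1(\kappa) = A \cdot \Phi_2 \Phi_1(1, 0)$ by equivariance. For case (ii), where $\eta = 0$, take $A = \mathrm{diag}(\xi, \xi^{-1})$: its Möbius action $z \mapsto \xi^2 z$ extends to the $\U$-isometry $(w, t) \mapsto (\xi^2 w, |\xi|^2 t)$, sending the plane at height $1$ with decoration $i$ to the plane at height $|\xi|^2$ with decoration $i\xi^2$, as required.

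For case (i), where $\eta \neq 0$, I would factor
\[
A = \begin{pmatrix} 1 & \xi/\eta \\ 0 & 1 \end{pmatrix} \begin{pmatrix} 0 & -1 \\ 1 & 0 \end{pmatrix} \begin{pmatrix} \eta & 0 \\ 0 & \eta^{-1} \end{pmatrix}
\]
so that $A \cdot (1, 0) = (\xi, \eta)$, and apply the three factors in order as isometries of $\U$. The diagonal first factor gives a plane at height $|\eta|^2$ with decoration $i\eta^2$ as in case (ii). The middle factor $z \mapsto -1/z$ extends to $\U$ via quaternionic multiplication as $q \mapsto -q^{-1}$ (writing $q = w + tj$ for $w \in \C$, $t > 0$); a direct calculation using $q^{-1} = \overline{q}/|q|^2$ shows it maps this plane to the Euclidean sphere of diameter $|\eta|^{-2}$ centered at $0 \in \partial \U$, and its derivative $dq \mapsto q^{-1}\, dq\, q^{-1}$ together with the identity $j z j = -\overline{z}$ for $z \in \C$ sends the north-pole decoration direction $i\eta^2$ to $i/\eta^2$. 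The final horizontal translation $z \mapsto z + \xi/\eta$ moves the center to $\xi/\eta$ without changing the diameter or the decoration. The main technical obstacle is this quaternionic computation for the middle factor; the other steps are routine consequences of equivariance.
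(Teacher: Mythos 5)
Your proposal is correct and follows essentially the same route as the paper: both reduce to the spinor $(1,0)$ via \reflem{horosphere_line_field_calculation} translated into $\U$, and then obtain the general case by $SL(2,\C)$-equivariance, applying explicit matrices acting as M\"{o}bius transformations (the paper uses the factorisations in \refeqn{matrices_to_everywhere}, e.g.\ passing from $(1,0)$ to $(0,1)$ by $z \mapsto -1/z$ and then applying $z \mapsto \eta^{-2}z + \xi/\eta$, which differs only cosmetically from your scale--invert--translate factorisation). Your quaternionic computation of how $q \mapsto -q^{-1}$ transforms the plane at height $|\eta|^2$ and its decoration is a correct verification of a step the paper asserts without detail, so the proposal is complete.
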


In particular, forgetting the decorations, the above proposition gives an explicit description of $\phi_2 \circ \phi_1 (\xi, \eta)$. And forgetting all but the centres of the horospheres, it yields $\phi_2^\partial \circ \phi_1 (\xi, \eta) = \frac{\xi}{\eta}$.

\begin{proof}
Letting $\xi = a+bi$, $\eta = c+di$, $\phi_1(\xi,\eta)$ is given in \reflem{Phi1_calculation}. Then $\phi_2^\partial$, for the hyperboloid model, just projectivises the rays of $L^+$ to points; taking $(X,Y,Z)$ for the point on each ray with $T=1$ gives $\phi_2^\partial \phi_1 (\xi, \eta)$ on $\partial \Disc$ as
\[
\frac{1}{a^2+b^2+c^2+d^2} \left( 2(ac+bd), \, 2(bc-ad), \, a^2 + b^2 - c^2 - d^2 \right).
\]
The centre of the horosphere on $\partial \U \cong \C \cup  \{ \infty  \}$ is then, using \refeqn{hyp_models_translations_2},
\[
\frac{ \frac{2(ac+bd)}{a^2 + b^2 + c^2 + d^2} + i \frac{2(bc-ad)}{a^2 + b^2 + c^2 + d^2}}{1 - \frac{a^2 + b^2 - c^2 - d^2}{a^2+b^2+c^2+d^2}} = \frac{a+bi}{c+di} = \frac{\xi}{\eta}.
\]

From \reflem{horosphere_line_field_calculation}, $\Phi_2 \circ \Phi_1 (1,0)$, in the hyperboloid model, is the horosphere centred at $p_0 = (1,0,0,1)$, passing through $q_0 = (1,0,0,0)$, and at $q_0$ has decoration in the direction $\partial_Y = (0,0,1,0)$. In $\Disc$, this corresponds to the horosphere centred at $(0,0,1)$, passing through $(0,0,0)$, and having decoration in the direction $(0,1,0)$ there. In $\U$, this corresponds to the horosphere centred at $\infty$, passing through $(0,0,1)$, and having decoration in the direction $(0,1,0)$ at that point. In other words, it has height $1$ and decoration specified by $i$.

The decorated horospheres $\Phi_2 \circ \Phi_1 (\xi, \eta)$ can now be found in general using $SL(2,\C)$-equivariance. Observe that
\begin{equation}
\label{Eqn:matrices_to_everywhere}
\begin{pmatrix} 0 \\ 1 \end{pmatrix} = \begin{bmatrix} 0 & -1 \\ 1 & 0 \end{bmatrix}.\begin{pmatrix} 1 \\ 0 \end{pmatrix}, \quad
\begin{pmatrix} \xi \\ 0 \end{pmatrix} = \begin{bmatrix} \xi & 0 \\ 0  & \xi^{-1} \end{bmatrix}.\begin{pmatrix} 1 \\ 0 \end{pmatrix}, \quad
\begin{pmatrix} \xi \\ \eta \end{pmatrix} = \begin{bmatrix} \eta^{-1} & \xi \\ 0 & \eta \end{bmatrix}.\begin{pmatrix} 0 \\ 1 \end{pmatrix}
\end{equation}
Thus the decorated horosphere of $(0,1)$ is obtained from the decorated horosphere of $(1,0)$ by applying the M\"{o}bius transformation $z \mapsto \frac{-1}{z}$; hence it is centred at $0$, has Euclidean diameter $1$, and is north-pole specified by $i$. Similarly, the decorated horosphere of $(\xi,0)$, for $\xi \neq 0$, is obtained from that of $(1,0)$ by applying $z \mapsto \xi^2 z$, hence is centred at $\infty$, has height $|\xi|^2$, and is specified by $i \xi^2$. And the decorated horosphere of $(\xi,\eta)$, for $\eta \neq 0$, is obtained from that of $(0,1)$ by applying $z \mapsto \eta^{-2} z + \frac{\xi}{\eta}$, hence is centred at $\xi/\eta$, has Euclidean diameter $|\eta|^{-2}$, and is north-pole specified by $i \eta^{-2}$.
\end{proof}

Thus, if we multiply a spinor $\kappa$ by a complex number $re^{i\theta}$, with $r>0$ and $\theta$ real, the effect on the corresponding horosphere $H$ is to translate it by distance $2 \log r$ along any geodesic $\gamma$ perpendicular to $H$ oriented towards its centre, and rotate the decoration by $2 \theta$ about $\gamma$.

\section{Spin decorations and complex lambda lengths}
\label{Sec:spin_decorations}

We now introduce the concepts necessary to explain the lifts of previous constructions to spin double covers, and the notion of complex lambda length between two spin-decorated horospheres. In this section $\hyp^3$ refers to hyperbolic 3-space, regardless of model.

We use the cross product $\times$ in $\hyp^3$ in the elementary sense that if $v,w$ are tangent to $\hyp^3$ at a common point $p$, making an angle of $\theta$, then $v \times w$ is tangent to $\hyp^3$ at $p$, has length $|v| \, |w| \sin \theta$, and points in the direction perpendicular to $v$ and $w$ given by the right-hand rule.

A horosphere $H$ in $\hyp^3$ (like any oriented surface in a 3-manifold) has two normal directions: we call the direction towards its centre \emph{outward} (``pointing out of $\hyp^3$"), and the direction away from its centre \emph{inward} (``pointing into $\hyp^3$"). There are well-defined outward and inward unit normal vector fields along $H$, which we denote $N^{out}, N^{in}$ respectively.

By a \emph{frame} we mean a right-handed orthonormal frame at a point in $\hyp^3$, i.e. a triple of orthogonal unit vectors $(f_1, f_2, f_3)$ such that $f_1 \times f_2 = f_3$. The collection of frames then forms a principal $SO(3)$-bundle over $\hyp^3$ which we denote
\[
\Fr \To \hyp^3.
\]
We may take its spin double (universal) cover, which we denote
\[
\Fr^S \To \hyp^3,
\]
which is a principal $\Spin(3)$-bundle. We refer to points of $\Fr^S$ as \emph{spin frames}. Each point in $\Fr$ has two lifts in $\Fr^S$, i.e. each frame lifts to two spin frames.

The group of orientation-preserving symmetries of $\hyp^3$ is naturally isomorphic to $PSL(2,\C)$, and acts simply transitively on $\Fr$. Choosing a basepoint $F_0$ in $\Fr$ then we may obtain an explicit identification $PSL(2,\C) \cong \Fr$, given by $M \leftrightarrow M.F_0$ for $M \in PSL(2,\C)$. 

Similarly, $SL(2,\C)$ acts simply transitively on $\Fr^S$. And the identification $PSL(2,\C) \cong \Fr$ lifts to double covers, after we choose a lifted basepoint $\widetilde{F_0}$, giving an explicit diffeomorphism $SL(2,\C) \cong \Fr^S$ as $A \leftrightarrow A.\widetilde{F_0}$. The two matrices $A,-A \in SL(2,\C)$ lifting $\pm A \in PSL(2,\C)$ then correspond to the two spin frames $A.F_0$, $-A.F_0
$ lifting the frame $(\pm A).F_0$. These two spin frames are related by a $2\pi$ rotation about any axis at their common point.
We can regard elements of $SL(2,\C)$ as \emph{spin isometries}; each isometry in $PSL(2,\C)$ lifts to two spin isometries, which differ by a $2\pi$ rotation.
Since $SL(2,\C)$ is the universal cover of the isometry group $PSL(2,\C)$, we can also regard elements of $SL(2,\C)$ as homotopy classes of paths of isometries starting at the identity.

From a decoration on a horosphere $H$, normalised to a parallel unit tangent vector field $v$ on $H$, we can then construct frame fields along $H$ as follows.
\begin{defn}
Let $v$ be a unit parallel tangent vector field on a horosphere $H$. 
\begin{enumerate}
\item
The \emph{inward frame field} of $v$ is the frame field on $H$ given by $F^{in} = (N^{in}, v, N^{in} \times v)$.
\item
The \emph{outward frame field} of $v$ is the frame field on $H$ given by $F^{out} = (N^{out}, v, N^{out} \times v)$.
\end{enumerate}
Indeed a decorated horosphere is uniquely specified by its inward and outward frame fields and so we can denote a decorated horosphere by $(H, F)$ where $F$ is the pair of frames $F = (F^{in}, F^{out})$.
\end{defn}
A frame field is a continuous section of $\Fr$ along $H$, and it has two lifts to $\Spin$.
\begin{defn}
An \emph{outward (resp. inward) spin decoration} on $H$ is a continuous lift of an outward (resp. inward) frame field from $\Fr$ to $\Fr^S$.
\end{defn}
From the inward frame field $(N^{in}, v, N^{in} \times v)$ of a unit parallel vector field $v$ on $H$, one can rotate the frame at each point of $H$ by an angle of $\pi$ or $-\pi$ about $v$ to obtain the outward frame field of $v$, and vice versa. After taking an inward spin decoration lifting the inward frame field, one can similarly rotate the frame at each point by an angle of $\pi$ about $v$, which will result in an outward spin decoration. However, rotations of $\pi$ or $-\pi$ about $v$ yield distinct results, related by a $2\pi$ rotation. Thus we make the following definition, which is a somewhat arbitrary convention, but we need it for our results to hold.
\begin{defn} \
\label{Def:associated_spin_decorations}
\begin{enumerate}
\item
Let $W^{out}$ be an outward spin decoration on $H$. The \emph{associated inward spin decoration} is the spin decoration obtained by rotating $W^{out}$ by angle $\pi$ about $v$ at each point of $H$.
\item
Let $W^{in}$ be an inward spin decoration on $H$. The \emph{associated outward spin decoration} is the spin decoration obtained by rotating $W^{in}$ by angle $-\pi$ about $v$ at each point of $H$.
\end{enumerate}
\end{defn}
We observe that associated spin decorations come in pairs $W = (W^{in}, W^{out})$, each associated to the other.
\begin{defn}
A \emph{spin decoration} on a horosphere $H$ is a pair $W = (W^{in}, W^{out})$ of associated inward and outward spin decorations. We denote a spin-decorated horosphere by $(H, W)$, and denote the set of spin-decorated horospheres by $\Hor^S$.
\end{defn}

Note that under the identification $PSL(2,\C) \cong \Fr$, with an appropriate choice of basepoint frame, the parabolic subgroup $P$ of equation \refeqn{parabolic_subgroup} (or more precisely its image $\pm P$ in $PSL(2,\C)$) corresponds to all the frames of the outward frame field of $\Phi_2 \circ \Phi_1 (1,0)$. The cosets of $\pm P$ then correspond bijectively with decorated horospheres. Similarly, under the identification $SL(2,\C) \cong \Fr^S$ with an appropriate choice of basepoint, the cosets of $P$ correspond bijectively with spin-decorated horospheres:
\[
PSL(2,\C) / (\pm P) \cong \Hor^D, \quad
SL(2,\C) / P \cong \Hor^S.
\]

We now consider lifts of the maps 
\[
\C_*^2 \stackrel{\Phi_1}{\To} \F \stackrel{\Phi_2}{\To} \Hor^D
\]
Topologically, we have $\C_*^2 \cong S^3 \times \R$, we have seen $\F \cong \Hor^D \cong UTS^2 \times \R \cong \RP^3 \times \R$, and we have seen $\Phi_1$ is a double cover and $\Phi_2$ is a diffeomorphism. Indeed, all the spaces here are $S^1 \times \R \cong \C_*$ bundles over $S^2$ and the maps are bundle maps, which in an appropriate sense are the identity on the base space $S^2$. The spaces $\F$ and $\Hor^D$ both have fundamental group $\Z/2$, and we can consider their double (hence universal) covers. A nontrivial loop in $\F$ is given by fixing a flagpole and rotating a flag through $2\pi$; in the double cover, rotating the flag through $2\pi$ is no longer a loop, but rotating the flag through $4\pi$ gives a loop.  

\begin{defn}
The double cover of the space of flags $\F$ is denoted $\F^S$. We call its elements \emph{spin flags}.
\end{defn}
Our spin flags are the \emph{null flags} of \cite{Penrose_Rindler84}.

A nontrivial loop in $\Hor^D$ is given by fixing a horosphere and rotating its decoration through $2\pi$. In the double cover, a rotation through $2\pi$ is not a loop  but a rotation through $4\pi$ gives a loop. In other words, the double cover of $\Hor^D$ is $\Hor^S$. Choosing basepoints (arbitrarily) one then obtains lifts $\widetilde{\Phi_1}, \widetilde{\Phi_2}$ such that the diagram
\[
\begin{array}{ccccc}
\C_*^2 & \stackrel{\widetilde{\Phi_1}}{\To} & \F^S & \stackrel{\widetilde{\Phi_2}}{\To} & \Hor^S \\
& \stackrel{\Phi_1}{\searrow} & \downarrow & & \downarrow \\
& & \F & \stackrel{\Phi_2}{\To} & \Hor^D
\end{array}
\]
commutes, where the downwards arrows are double covering maps. The action of $SL(2,\C)$ (which is simply connected) lifts to actions on these covers and all maps remain $SL(2,\C)$-equivariant.

Thus a spinor $\kappa$ maps under $\widetilde{\Phi_2} \circ \widetilde{\Phi_1}$ to a spin-decorated horosphere lifting the decorated horosphere described in \refprop{horospheres_explicitly}. Multiplying $\kappa$ by $re^{i\theta}$, with $r>0$ and $\theta$ real, still translates it $2 \log r$ towards its centre and rotates the decoration by $2 \theta$, but now the rotation is taken modulo $4\pi$.

We can now prove \refthm{main_thm_1}, that there is an explicit smooth bijective correspondence between $\C_*^2$ and $\Hor^S$.
\begin{proof}[Proof of \refthm{main_thm_1}]
At the end of \refsec{spin_vectors_to_Minkowski} we observed that $\Phi_1$ is a smooth double cover, topologically $S^3 \times \R \To \RP^3 \To \R$. In \reflem{Phi_2_diffeo} we showed $\Phi_2$ is a diffeomorphism. Their lifts $\widetilde{\Phi_1}$ and $\widetilde{\Phi_2}$ are then both diffeomorphisms, topologically $S^3 \times \R \To S^3 \times \R$. We have defined these maps explicitly. We have also shown all maps are $SL(2,\C)$-equivariant. Thus $\widetilde{\Phi_2} \circ \widetilde{\Phi_1}$ provides the claimed correspondence.
\end{proof}

We use spin frames to define complex lambda lengths between spin-decorated horospheres. For this, we need to compare frames along geodesics, and we need frames to be adapted to geodesics, in a suitable sense. (Here, as throughout, frames are right-handed and orthonormal.)
\begin{defn}
Let $p$ be a point on an oriented geodesic $\gamma$ in $\hyp^3$. A frame $F = (f_1, f_2, f_3)$ at $p$ is \emph{adapted} to $\gamma$ if $f_1$ is positively tangent to $\gamma$. A spin frame $\widetilde{F}$ at $p$ is \emph{adapted} to $\gamma$ if it is the lift of a frame adapted to $\gamma$.
\end{defn}

Now if we have two points $p_1, p_2$ on an oriented geodesic $\gamma$, and frames $F^i = (f_1^i, f_2^i, f_3^i)$ at each $p_i$, adapted to $\gamma$, then there is then a screw motion along $\gamma$ which takes $F^1$ to $F^2$ as follows. Being adapted to $\gamma$, the first vectors $f_1^1$ and $f_1^2$ in each frame point along $\gamma$. Parallel translation along $\gamma$ from $p_1$ to $p_2$ takes $F^1$ to a frame $F'^1$ at $p_2$ which agrees with $F^2$ in its first vector. This translation is by a signed distance $\rho$ which we regard as positive or negative according to the orientation on $\gamma$. A further rotation of some angle $\theta$ about $\gamma$ (signed using the orientation of $\gamma$) then moves $F'^1$ to $F^2$. Note that $\theta$ is only well defined modulo $2\pi$. However we may repeat this process with spin frames, and then $\theta$ is well defined modulo $4\pi$.
\begin{defn}
Let $F^1, F^2$ be frames, or spin frames, at points $p_1, p_2$ on an oriented geodesic $\gamma$, adapted to $\gamma$. The \emph{complex distance} from $F^1$ to $F^2$ is $\rho + i \theta$, where a translation along $\gamma$ of signed distance $\rho$, followed by a rotation about $\gamma$ of angle $\theta$, takes $F^1$ to $F^2$.
\end{defn}
In general two frames are not adapted to a common oriented geodesic, but when two frames are adapted to a common oriented geodesic, that oriented geodesic is unique, and so we may speak of the complex distance between the frames. The same applies to spin frames. Note that the complex distance between frames adapted to a common geodesic is well defined modulo $2\pi i$; between spin frames, it is well defined modulo $4\pi i$.

We can now define complex lambda lengths between decorated and spin-decorated horospheres. Let $H_1, H_2$ be horospheres, let $z_i \in \partial \hyp^3$ be the centre of $H_i$, and let $\gamma_{ij}$ be the oriented geodesic from $z_i$ to $z_j$. Thus $\gamma_{12}$ and $\gamma_{21}$ are the two orientations of the unique common perpendicular to the horospheres. Let $p_i = \gamma_{12} \cap H_i$. If the $H_i$ are decorated, we have pairs $F_i = (F^{in}_i, F^{out}_i)$ of inward and outward frame fields on each $H_i$, and note that $F^{in}_1 (p_1)$ and $F^{out}_2 (p_2)$ are both adapted to $\gamma_{12}$. If the $H_i$ are spin-decorated, we have pairs $W_i = (W^{in}_i, W^{out}_i)$ of associated inward and outward spin decorations on each $H_i$, and we note that $W^{in}_1 (p_1)$ and $W^{out}_2 (p_2)$ are adapted to $\gamma_{12}$.
\begin{defn} \
\begin{enumerate}
\item
If $(H_1, F_1)$ and $(H_2, F_2)$ are decorated horospheres, the \emph{complex lambda length} from $(H_1, F_1)$ to $(H_2, F_2)$ is
\[
\lambda_{12} = \exp \left( \frac{d}{2} \right),
\]
where $d$ is the complex distance from $F^{in}_1 (p_1)$ to $F^{out}_2 (p_2)$.
\item
If $(H_1, W_1)$ and $(H_2, W_2)$ are spin-decorated horospheres, the \emph{complex lambda length} from $(H_1, W_1)$ to $(H_2, W_2)$ is
\[
\lambda_{12} = \exp \left( \frac{d}{2} \right),
\]
where $d$ is the complex distance from $W^{in}(p_1)$ to $W^{out}(p_2)$.
\end{enumerate}
When the horospheres $H_1$ and $H_2$ have a common centre, then the complex lambda length between them is zero in either case.
\end{defn}

Note that for decorated horospheres, $d$ is only well defined modulo $2\pi i$, so $\lambda_{12}$ is only well defined up to sign. For spin-decorated horospheres however $d$ is well defined modulo $4\pi i$, so $\lambda_{12}$ is a well defined complex number, and indeed we have a well defined function $\lambda \colon \Hor^S \times \Hor^S \To \C$.

We observe that $\lambda$ is in fact continuous. In particular, if two horospheres move so that their centres approach each other, then the length of the segment of their common perpendicular geodesic which lies in the intersection of the horoballs becomes arbitrarily large, so $\Re d \rightarrow -\infty$ and hence $\lambda \rightarrow 0$.

In fact, as we now see, $\lambda$ is antisymmetric. This lemma is also a corollary of \refthm{main_thm_2}, so strictly speaking is unnecessary, but the proof below gives a direct geometric explanation.
\begin{lem}
Let $(H_1, W_1)$, $(H_2, W_2)$ be spin-decorated horospheres, and let $\lambda_{ij}$ be the complex lambda length from $(H_i, W_i)$ to $(H_j, W_j)$. Then $\lambda_{12} = -\lambda_{21}$.
\end{lem}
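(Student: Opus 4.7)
If $H_1$ and $H_2$ share a centre the claim is trivial since both lambda lengths are zero by definition, so assume henceforth that the centres are distinct and $\gamma_{12}, \gamma_{21}, p_1, p_2$ are all well defined.

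The approach is to work at the level of spin isometries in $SL(2,\C)$. Let $S_{12}$ and $S_{21}$ be the unique spin isometries with $S_{12} \cdot W_1^{in}(p_1) = W_2^{out}(p_2)$ and $S_{21} \cdot W_2^{in}(p_2) = W_1^{out}(p_1)$; each pair of spin frames is adapted to its common geodesic, so $S_{12}$ is a spin screw along $\gamma_{12}$ encoding $d_{12}$ and $S_{21}$ is a spin screw along $\gamma_{21}$ encoding $d_{21}$.

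The key identity I will establish is $S_{12} \cdot S_{21} = -I$. Writing $R_\theta^v$ for the spin rotation by angle $\theta$ about axis $v$ at the relevant point, \refdef{associated_spin_decorations} gives $W_j^{in} = R_\pi^{v_j} \cdot W_j^{out}$, equivalently $W_j^{out} = R_{-\pi}^{v_j} \cdot W_j^{in}$. A direct chase,
\[
W_1^{out}(p_1) = R_{-\pi}^{v_1} \cdot W_1^{in}(p_1) = R_{-\pi}^{v_1} \cdot S_{12}^{-1} \cdot W_2^{out}(p_2) = R_{-\pi}^{v_1} \cdot S_{12}^{-1} \cdot R_{-\pi}^{v_2} \cdot W_2^{in}(p_2),
\]
yields $S_{21} = R_{-\pi}^{v_1} \cdot S_{12}^{-1} \cdot R_{-\pi}^{v_2}$. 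Since the underlying isometry of $S_{12}$ carries the adapted frame at $p_1$ to that at $p_2$, in particular it sends $v_1$ to $v_2$, so by conjugation $S_{12} \cdot R_{-\pi}^{v_1} \cdot S_{12}^{-1} = R_{-\pi}^{v_2}$, and hence
\[
S_{12} \cdot S_{21} = R_{-\pi}^{v_2} \cdot R_{-\pi}^{v_2} = R_{-2\pi}^{v_2} = -I,
\]
using the fundamental spin identity $R_{\pm 2\pi}^v = -I$ in $SU(2)$.

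The final step converts $S_{21} = -S_{12}^{-1}$ into a relation between complex distances modulo $4\pi i$. In an eigenbasis adapted to the common axis, $S_{12}$ is diagonal with eigenvalues $e^{\pm d_{12}/2}$, and $S_{12}^{-1}$ realises the spin screw along the reversed axis $\gamma_{21}$ of complex distance again $d_{12}$, since reversing the orientation negates both the translation parameter and the right-hand-rule rotation angle, which exactly cancels the matrix inversion. Multiplying by $-I$ inserts a full $2\pi$ rotation about the axis, shifting the imaginary part of the complex distance by $2\pi$, so $d_{21} \equiv d_{12} + 2\pi i \pmod{4\pi i}$ and
\[
\lambda_{21} = \exp(d_{21}/2) = \exp(d_{12}/2) \cdot e^{i\pi} = -\lambda_{12}.
\]
The main technical obstacle will be the bookkeeping of sign conventions: confirming that the two $R_{-\pi}$ factors combine to $-I$ rather than cancelling, that orientation reversal preserves rather than negates the complex distance of the screw, and that multiplication by $-I$ contributes exactly $+2\pi i$ modulo $4\pi i$.
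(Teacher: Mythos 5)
Your proof is correct, and it establishes exactly the relation the paper needs, $d_{21} \equiv d_{12} + 2\pi i \pmod{4\pi i}$ (equivalent mod $4\pi i$ to the paper's $d_{12} = d_{21} + 2\pi i$). The underlying mechanism is the same as the paper's: the asymmetric $\pm\pi$ convention of \refdef{associated_spin_decorations}, transported by the connecting spin isometry via the conjugation $S_{12} R^{v_1}_{-\pi} S_{12}^{-1} = R^{v_2}_{-\pi}$ (which holds at the spin level since a conjugated rotation path starting at the identity is again such a path, so its spin lift is determined), produces a net $2\pi$ discrepancy. The difference is in the packaging: the paper never forms a product of spin isometries, but instead introduces the auxiliary spin frame $Y_1^{out}$ (the $2\pi$-rotated lift of $F_1^{out}$), observes that the screw realising $d_{12}$ carries $Y_1^{out}(p_1)$ to $W_2^{in}(p_2)$, and compares complex distances directly, using that the inverse screw along the reversed geodesic has the same complex distance. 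You instead prove the clean group identity $S_{12} S_{21} = -I$ in $SL(2,\C)$ and then decode it, using the same two facts you flag yourself: orientation reversal of the axis cancels inversion so $S_{12}^{-1}$ is the screw along $\gamma_{21}$ of distance $d_{12}$, and multiplication by $-I$ shifts the angle by $2\pi$, hence the distance by $2\pi i$ mod $4\pi i$. Your formulation has the small advantage of isolating the sign as a single algebraic identity independent of any choice of auxiliary frame; the paper's has the advantage of staying entirely in the language of complex distances between spin frames. One point worth making explicit in a final write-up is the uniqueness step you use implicitly twice: since $SL(2,\C)$ acts simply transitively on spin frames, the spin isometry carrying one adapted spin frame to another is unique and therefore coincides with the spin screw realising the complex distance; this is what lets you identify $S_{21}$ with the screw of distance $d_{21}$ and conclude.
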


\begin{proof}
If $H_1, H_2$ have common centre $\lambda_{12} = \lambda_{21} = 0$. So we may assume $H_1, H_2$ have distinct centres $z_1, z_2$. As above, let $\gamma_{ij}$ be the oriented geodesic from $z_1$ to $z_2$, and let $p_i = \gamma_{12} \cap H_i$. Let $d_{ij}$ be the complex distance from $W_i^{in}$ to $W_j^{out}$ along $\gamma_{ij}$. The spin frames $W_i^{in}$, $W_i^{out}$ yield frames $F_i^{in}, F_i^{out}$ of unit parallel vector fields $V_i$ on $H_i$. 

Recall from \refdef{associated_spin_decorations} that $W_2^{in}$ is obtained from $W_2^{out}$ by a rotation of $\pi$ about $V_2$, and $W_1^{out}$ is obtained from $W_1^{in}$ by a rotation of $-\pi$ about $V_1$. Define $Y_1^{out}$ to be the result of rotating $W_1^{in}$ by $\pi$ about $V_1$, so $Y_1^{out}$ and $W_1^{out}$ both project to $F_1^{out}$, but differ by a $2\pi$ rotation.

The spin isometry which takes $W_1^{in} (p_1)$ to $W_2^{out} (p_2)$ also takes $Y_1^{out} (p_1)$  to $W_2^{in} (p_2)$. Hence the complex distance $d_{12}$ from $W_1^{in}(p_1)$ to $W_2^{out}(p_2)$ along $\gamma_{12}$ is equal to the complex distance from $W_2^{in} (p_2)$ to $Y_1^{out} (p_1)$ along $\gamma_{21}$. But since $Y_1^{out}$ and $W_1^{out}$ differ by a $2\pi$ rotation, this latter complex distance is $d_{21} + 2\pi i$. From $d_{12} = d_{21} + 2\pi i$ mod $4\pi i$ we obtain $\lambda_{12} = - \lambda_{21}$.
\end{proof}

If we have two spin frames adapted to a common geodesic, and apply a homotopy $M_t \in PSL(2,\C)$ of isometries to them, for $t \in [0,1]$, starting from the identity $M_0$, the complex distance between the spin frames remains constant; such a homotopy describes a point of the universal cover $SL(2,\C)$. Hence complex distance between spin frames is invariant under the action of $SL(2,\C)$. Similarly applying a homotopy to two spin-decorated horospheres and their common perpendicular geodesic, we observe that complex lambda length is also invariant under the action of $SL(2,\C)$. In other words, if $A \in SL(2,\C)$ and $(H_1, W_1), (H_2, W_2)$ are spin-decorated horospheres, then the complex lambda length from $(H_1, W_1)$ to $(H_2, W_2)$ is equal to the complex lambda length from $A.(H_1, W_1)$ to $A.(H_2, W_2)$.

We can now prove \refthm{main_thm_2}: given spinors $\kappa_1, \kappa_2 \in \C_*^2$, and corresponding spin-decorated horospheres $(H_i, W_i) = \widetilde{\Phi_2} \circ \widetilde{\Phi_1} (\kappa_i)$, the complex lambda length $\lambda_{12}$ form $(H_1, W_1)$ to $(H_2, W_2)$ satisfies
\[
\{ \kappa_1, \kappa_2 \} = \lambda_{12}.
\]

\begin{proof}[Proof of \refthm{main_thm_2}]
Recalling that the spinor $\kappa = (\xi, \eta)$ corresponds to a horosphere in $\U^3$ with centre $\xi/\eta$, we observe that $\kappa_1, \kappa_2$ are linearly dependent (over $\C)$ precisely when $H_1, H_2$ have common centre. In other words, $\{\kappa_1, \kappa_2 \} = 0$ precisely when $\lambda_{12} = 0$. We can thus assume $\kappa_1, \kappa_2$ are linearly independent.

First we prove the result when $\kappa_1 = (1,0)$ and $\kappa_2 = (0,1)$. From \refprop{horospheres_explicitly} then $(H_1, W_1)$ is a spin lift of the decorated horosphere centred at $\infty$ with height $1$ and decoration specified by $i$; and $(H_2, W_2)$ is a spin lift of the decorated horosphere centred at $0$ with Euclidean diameter $1$ and decoration north-pole specified by $i$. They are thus tangent at the point $p = (0,0,1)$, at which point $W_1^{in}$ and $W_2^{out}$ project to coincident frames, hence either coincide or differ by $2\pi$.

To see that they coincide, we consider the following matrix $A \in SL(2,\C)$, regarded as the lift to the universal cover of the path $M_t \in PSL(2,\C)$ for $t \in [0, \pi/2]$, starting at the identity:
\[
A = \begin{bmatrix} 0 & -1 \\ 1 & 0 \end{bmatrix} \in SL(2,\C),
\quad 
M_t = \pm \begin{bmatrix} \cos t & - \sin t \\ \sin t & \cos t \end{bmatrix} \in PSL(2,\C).
\]
Clearly $A.\kappa_1 = \kappa_2$, so by $SL(2,\C)$-equivariance $A.(H_1,W_1)=(H_2,W_2)$. Geometrically, in the upper half space model, $M_t$ is a rotation of angle $2t$ about the oriented geodesic $\delta$ from $-i$ to $i$. Over $t \in [0,\pi/2]$, the point $p$ and the vector $i$ specifying the decorations remain fixed, and the frame $W_1^{in}$ at $p$ rotates by $\pi$ about $\delta$ to arrive at $A.W_1^{in} = W_2^{in}$. Applying \refdef{associated_spin_decorations}, we then obtain the associated outward spin frame $W_2^{out}$ by a rotation of $-\pi$ about the decoration vector, i.e. about the same axis $\delta$. Thus indeed $W_1^{in}(p) = W_2^{out}(p)$, their complex distance is $0$, and $\lambda_{12} = 1$.

Next we prove the result when $\kappa_1 = (1,0)$ and $\kappa_2 = (0,D)$ for some complex $D \neq 0$. In this case $(H_2, W_2)$ is the spin lift of a decorated horosphere centred at $0$, with Euclidean diameter $|D|^{-2}$ and decoration north-pole specified by $i D^{-2}$. The common perpendicular $\gamma_{12}$ runs from $\infty$ to $0$, intersecting $H_1$ at $p_1 = (0,0,1)$ and $H_2$ at $p_2 = (0,0,|D|^{-2})$. Thus the signed translation distance from $p_1$ to $p_2$ is $2 \log |D|$ and the rotation angle is given by $\arg D^2 = 2 \arg D$ mod $2\pi$; lifting to spin frames we show it is indeed $2 \arg D$ mod $4\pi$. Consider again an $A \in SL(2,\C)$ lifting a path $M_t \in PSL(2,\C)$ from the identity, 
\[
A = \begin{bmatrix} e^{-\log|D|-i \arg D} & 0 \\ 0 & e^{\log|D| + i \arg D} \end{bmatrix} = \begin{bmatrix} D^{-1} & 0 \\ 0 & D \end{bmatrix}, \quad
M_t = \pm \begin{bmatrix} e^{-t(\log|D|+i\arg D)} & 0 \\ 0 & e^{t(\log|D|+i\arg D)} \end{bmatrix}
\]
where we take $\arg D \in [0,2\pi)$ and $t \in [0, 1]$. We have $A.(0,1) = \kappa_2$, so $A$ sends $\widetilde{\Phi_2} \circ \widetilde{\Phi_1} (0,1)$ (i.e. $(H_2, W_2)$ from the previous case) to $(H_2, W_2)$ here. Geometrically $M_t$ is a translation of length $2t\log|D|$ and rotation of angle $2t \arg D$ about $\gamma_{12}$, so $A$ as a spin isometry translates by $2 \log |D|$ and rotates by $2 \arg D$ modulo $4\pi$. Since the complex distance from $W_1^{in}$ to $W_2^{out}$ at $p_1$ was zero in the previous case $D=1$, the complex distance now becomes $2 \log |D| + 2 \arg D i$ mod $4\pi i$. Thus $\lambda_{12} = D = \{\kappa_1, \kappa_2\}$.

Finally, we prove the result for general linearly independent $\kappa_1, \kappa_2$. There exists $A \in SL(2,\C)$ such that $A.\kappa_1 = (1,0)$ and $A.\kappa_2 = (0,D)$, where $D = \{\kappa_1, \kappa_2\}$. Applying this $A$ then the complex lambda length from $(H_1, W_1)$ to $(H_2, W_2)$ is equal to the complex lambda length from $A.(H_1, W_1)$ to $A.(H_2, W_2)$, which is $\{\kappa_1, \kappa_2\}$ from the previous case.
\end{proof}

\section{Hyperbolic geometry applications}
\label{Sec:hyperbolic_applications}

The above theory can be applied to any situation involving horospheres in hyperbolic geometry, in up to 3 dimensions. Endowing each horosphere with a spin decoration, we obtain a spinor, and then applying the bilinear form $\{ \cdot, \cdot \}$ gives us geometric information about horospheres.

As a first application we consider hyperbolic ideal tetrahedra, and prove the Ptolemy equation of \refthm{main_thm_3}. Take an ideal tetrahedron with vertices labelled $0,1,2,3$, and a spin decoration $(H_i, W_i)$ on each ideal vertex $i$. We must show that the complex lambda lengths $\lambda_{ij}$ from $(H_i, W_i)$ to $(H_j, W_j)$ satisfy
\begin{equation}
\label{Eqn:Ptolemy}
\lambda_{01} \lambda_{23} + \lambda_{03} \lambda_{12} = \lambda_{02} \lambda_{13}.
\end{equation}

\begin{proof}[Proof of \refthm{main_thm_3}]
Let $\kappa_i \in \C^2$ be the spinor corresponding to $(H_i, W_i)$. Let $M$ be the $2 \times 4$ complex matrices whose $j$'th column is $\kappa_j$, and let $M_{ij}$ be the $2 \times 2$ submatrix whose columns are $\kappa_i$ and $\kappa_j$ in order. Then $\det M_{ij} = \{ \kappa_i, \kappa_j \} = \lambda_{ij}$, so the claimed equation becomes
\[
\det M_{01} \det M_{23} + \det M_{03} \det M_{12} = \det M_{02} \det M_{13},
\]
which is a well known Pl\"{u}cker relation.
\end{proof}

Note that if we multiply any one of the spinors, say $\kappa_i$ corresponding to $(H_i, W_i)$, by a complex scalar $c$, each term of the Ptolemy equation \refeqn{Ptolemy} involving index $i$ is also scaled by $c$. For instance if we multiply $\kappa_1$ by $c$ then $\lambda_{01}, \lambda_{12}, \lambda_{13}$ are all multiplied by $c$. In some sense then the choice of decorated horosphere at each vertex is a choice of gauge. The equation is in a certain sense, just the usual equation
\begin{equation}
\label{Eqn:shape_equation}
z + z'^{-1} = 1,
\end{equation}
relating shape parameters for a hyperbolic ideal tetrahedron $\Delta$, as we see next. By the \emph{shape parameter} $z_e$ of $\Delta$ along an edge $e$, we mean the complex number $z$ such that, if we place the two endpoints of $e$ at $0$ and $\infty$ in $\partial \U^3 \cong \C \cup \{\infty\}$, and place the remaining two ideal vertices at $1$ and a point with positive imaginary part, then the final vertex lies at $z$. By this definition, opposite edges of $\Delta$ have the same shape parameter, and the three pairs of shape parameters can be denoted $z,z',z''$ such that $z' = \frac{1}{1-z}$ and $z'' = \frac{z-1}{z}$. In particular, \refeqn{shape_equation} holds, and continues to hold if we cyclically permute $(z,z',z'') \mapsto (z',z'',z)$.

\begin{prop}
\label{Prop:shape_parameters}
Numbering the ideal vertices of $\Delta$ by $0,1,2,3$ as in \reffig{Tetrvertexlabels}, let the shape parameter of edge $ij$ by $z_{ij}$. Choose a spin-decorated horosphere $(H_i, W_i)$ at ideal vertex $i$ and let $\lambda_{ij}$ be the complex lambda length from $(H_i, W_i)$ to $(H_j, W_j)$. Then
\begin{equation}
\label{Eqn:shape_parameters}
z_{01} = z_{23} = \frac{\lambda_{02} \lambda_{13}}{\lambda_{03} \lambda_{12}}, \quad
z_{02} = z_{13} = -\frac{\lambda_{03} \lambda_{12}}{\lambda_{01} \lambda_{23}}, \quad
z_{03} = z_{12} = \frac{\lambda_{01} \lambda_{23}}{\lambda_{02} \lambda_{13}}.
\end{equation}
\end{prop}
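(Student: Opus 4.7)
The plan is to exploit $SL(2,\C)$-equivariance together with a gauge invariance. Every index $0,1,2,3$ appears exactly once in the numerator and once in the denominator of each ratio in \refeqn{shape_parameters}, so each ratio is unchanged under independent rescalings $\kappa_i \mapsto c_i \kappa_i$ with $c_i \in \C^*$; it is also $SL(2,\C)$-invariant since $\{\cdot,\cdot\}$ is preserved by $SL(2,\C)$. Simultaneously, the shape parameters $z_{ij}$ depend only on the ideal vertices and are invariant under the orientation-preserving isometries of $\hyp^3$. Thus both sides of \refeqn{shape_parameters} are invariant under the same symmetries, and it suffices to verify the equalities in a single convenient normal form.

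First I would apply an element of $SL(2,\C)$ to move the ideal vertices $0,1,2$ to the points $\infty, 0, 1$ of $\partial\hyp^3$ in the upper half space model. By definition of the shape parameter (with an appropriate orientation convention, see below) the fourth vertex then lies at $z := z_{01}$. Since by \refprop{horospheres_explicitly} a spinor $(\xi,\eta)$ corresponds to a horosphere with centre $\xi/\eta$, I can rescale each $\kappa_i$ so that
\[
\kappa_0 = (1,0), \quad \kappa_1 = (0,1), \quad \kappa_2 = (1,1), \quad \kappa_3 = (z,1).
\]
Computing the six lambda lengths via $\lambda_{ij} = \{\kappa_i,\kappa_j\} = \det(\kappa_i,\kappa_j)$ then gives immediately
\[
\lambda_{01} = \lambda_{02} = \lambda_{03} = 1, \quad \lambda_{12} = -1, \quad \lambda_{13} = -z, \quad \lambda_{23} = 1-z.
\]

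Substituting these into the three right-hand sides of \refeqn{shape_parameters} yields $z$, $\frac{1}{1-z}$, and $\frac{z-1}{z}$ respectively, which are precisely the three numbers $z, z', z''$ attached to the three pairs of opposite edges of $\Delta$ via \refeqn{shape_equation} and the cyclic rule $(z,z',z'') \mapsto (z',z'',z)$. Note that the product of the three right-hand sides is $-1$, which is a consistency check matching the well-known identity $zz'z'' = -1$.

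The only step that requires genuine care, rather than direct computation, is matching orientation conventions: one must check that the assignment of $z, \frac{1}{1-z}, \frac{z-1}{z}$ to the pairs $\{01,23\}, \{02,13\}, \{03,12\}$ respectively is the correct one for the labelling in \reffig{Tetrvertexlabels}. This is a cross-ratio verification: for each edge $ij$, move its endpoints to $\{0,\infty\}$ in the canonical way and one of the remaining two vertices to $1$, using the cyclic ordering $(i,j,k,l)$ dictated by the labels, then read off the location of the last vertex. The computations above already handle the edge $01$; the edges $02$ and $03$ are analogous, and the identity $z_{ij} = z_{kl}$ for opposite edges is classical. Given this convention check, together with the $SL(2,\C)$-and-gauge invariance noted at the start, the general case follows from the normalised one, completing the proof.
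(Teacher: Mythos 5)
Your proposal is correct and follows essentially the same route as the paper: use spin-isometry invariance and the gauge invariance of the homogeneous ratios to normalise the ideal vertices, choose explicit spinors with the prescribed centres, and compute the lambda lengths as $2\times 2$ determinants via Theorem \ref{Thm:main_thm_2}. The only cosmetic differences are your normalisation (vertices $0,1,2,3$ at $\infty,0,1,z$ rather than the paper's $0,\infty,z,1$) and that you verify all three identities directly from the cross-ratio/orientation conventions (which you sketch rather than carry out, much as the paper asserts them), whereas the paper verifies only the first and deduces the other two from the even relabelling $(0,1,2,3)\mapsto(0,2,3,1)$ together with the antisymmetry of $\lambda$.
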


\begin{figure}[h]
\begin{center}
\begingroup%
  \makeatletter%
  \providecommand\color[2][]{%
    \errmessage{(Inkscape) Color is used for the text in Inkscape, but the package 'color.sty' is not loaded}%
    \renewcommand\color[2][]{}%
  }%
  \providecommand\transparent[1]{%
    \errmessage{(Inkscape) Transparency is used (non-zero) for the text in Inkscape, but the package 'transparent.sty' is not loaded}%
    \renewcommand\transparent[1]{}%
  }%
  \providecommand\rotatebox[2]{#2}%
  \newcommand*\fsize{\dimexpr\f@size pt\relax}%
  \newcommand*\lineheight[1]{\fontsize{\fsize}{#1\fsize}\selectfont}%
  \ifx\svgwidth\undefined%
    \setlength{\unitlength}{95.56947441bp}%
    \ifx\svgscale\undefined%
      \relax%
    \else%
      \setlength{\unitlength}{\unitlength * \real{\svgscale}}%
    \fi%
  \else%
    \setlength{\unitlength}{\svgwidth}%
  \fi%
  \global\let\svgwidth\undefined%
  \global\let\svgscale\undefined%
  \makeatother%
  \begin{picture}(1,1.02439286)%
    \lineheight{1}%
    \setlength\tabcolsep{0pt}%
    \put(0,0){\includegraphics[width=\unitlength,page=1]{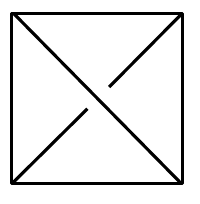}}%
    \put(0.92647168,0.02391656){\color[rgb]{0,0,0}\makebox(0,0)[lt]{\lineheight{0}\smash{\begin{tabular}[t]{l}0\end{tabular}}}}%
    \put(-0.01226202,0.95368135){\color[rgb]{0,0,0}\makebox(0,0)[lt]{\lineheight{0}\smash{\begin{tabular}[t]{l}1\end{tabular}}}}%
    \put(0.00567554,0){\color[rgb]{0,0,0}\makebox(0,0)[lt]{\lineheight{0}\smash{\begin{tabular}[t]{l}2\end{tabular}}}}%
    \put(0.95636764,0.95368186){\color[rgb]{0,0,0}\makebox(0,0)[lt]{\lineheight{0}\smash{\begin{tabular}[t]{l}3\end{tabular}}}}%
  \end{picture}%
\endgroup%

  \caption{Tetrahedron with vertices labeled $0$, $1$, $2$, $3$.}
  \label{Fig:Tetrvertexlabels}
\end{center}
\end{figure}

\begin{proof}
If we move a spin-decorated tetrahedron by a spin isometry, all shape parameters and complex lambda lengths remain invariant. Noting the orientation of \reffig{Tetrvertexlabels}, we may place the ideal vertices $0,1,2,3$ respectively at $0,\infty,z,1 \in \partial \hyp^3$ respectively, so $z=z_{01}$. With this arrangement then $z=z_{01}=z_{23}$, $z' = z_{02} = z_{13}$ and $z'' = z_{03}=z_{12}$. If we multiply a spinor $\kappa_i$ corresponding to $(H_i, W_i)$ by a complex scalar $c$, the homogeneous expressions in lambda lengths in \refeqn{shape_parameters} are invariant. Thus it suffices to prove the claim for any single choice of spin decoration, or spinor, at each vertex. Take spinors $\kappa_0 = (0,1)$, $\kappa_1 = (1,0)$, $\kappa_2 = (z,1)$, $\kappa_3 = (1,1)$. By \refthm{main_thm_2} then we calculate all complex lambda lengths as
\[
\lambda_{01} = -1, \quad
\lambda_{02} = -z, \quad
\lambda_{03} = -1, \quad
\lambda_{12} = 1, \quad
\lambda_{13} = 1, \quad
\lambda_{23} = z-1.
\]
This immediately gives the first equation.  Permuting labels $(0,1,2,3) \mapsto (0,2,3,1)$ on $\Delta$, similarly permuting indices on each $\lambda$, and using the antisymmetry of $\lambda$, then gives the subsequent two equations.
\end{proof}

When the ideal tetrahedron $\Delta$ is degenerate and lies in a plane, by an isometry we may place $\Delta$ on the upper half plane model $\U^2$ of the hyperbolic plane with $\partial \U^2 = \R \cup \{\infty\}$, i.e. the upper half plane model inside the upper half space model. All vertices at infinity then lie in $\R \cup \{\infty\}$, and we may choose all spin directions to point perpendicular to $\U^2$ in the following sense. Note that every horocycle $H$ centred at $z \in \R \cup  \{ \infty \}$ extends to a unique horosphere $H'$ in $\U^3$, also centred at $z$.
\begin{defn}
\label{Def:planar_spin_decoration}
Let $H$ be a horocycle in $\U^2 \subset \U^3$. A \emph{planar spin decoration} on $H$ is a spin decoration $(W^{in}, W^{out})$ on $H'$ such that $W^{in}, W^{out}$ project to frames specified by $i$.
\end{defn}
``Specified" here means north-pole specified, if $H'$ is a sphere in the upper half space model. 

\begin{lem}
\label{Lem:real_spinors_planar_decorations}
A spinor yields a planar spin decoration on a horocycle if and only if it is real.
\end{lem}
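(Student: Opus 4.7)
The plan is to unpack the planar condition via Proposition~\ref{Prop:horospheres_explicitly}, which gives $\widetilde{\Phi}_2 \circ \widetilde{\Phi}_1$ explicitly in the upper half space model, and then use $SL(2,\R)$-equivariance to handle the spin lift. I take $\hyp^2 \subset \hyp^3$ to be the half-plane $\{y=0\} \subset \U$, so that $\partial \hyp^2 = \R \cup \{\infty\}$ and the normal direction to $\hyp^2$ in $\U$ is $\pm i$. A planar spin decoration is then one whose underlying decoration has centre in $\R \cup \{\infty\}$ and specifying (or north-pole specifying) vector a positive real multiple of $i$.

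For the ``if'' direction, I would first verify the base case $\kappa_0 = (1,0)$: by Proposition~\ref{Prop:horospheres_explicitly} this yields the horizontal plane at height $1$ centred at $\infty$, with decoration specified by $i$; both the centre and the specifying vector meet the planar conditions. After normalising the (otherwise arbitrary) basepoint in the definitions of $\widetilde{\Phi}_1, \widetilde{\Phi}_2$ so that the corresponding spin lift is the planar one (rather than its $2\pi$-rotated partner), $\widetilde{\Phi}_2 \circ \widetilde{\Phi}_1(1,0)$ is a planar spin decoration. For a general nonzero real spinor $\kappa = (\xi,\eta)$, observe that it occurs as the first column of some matrix $A \in SL(2,\R) \subset SL(2,\C)$ (if $\xi \neq 0$ take second column $(0, \xi^{-1})^T$; otherwise take $(-\eta^{-1}, 0)^T$), so $\kappa = A.(1,0)$. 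Real M\"{o}bius transformations act as orientation-preserving isometries of $\hyp^2$ preserving its normal direction $i$, so the action of $SL(2,\R)$ sends planar spin decorations to planar spin decorations, and the $SL(2,\C)$-equivariance of $\widetilde{\Phi}_2 \circ \widetilde{\Phi}_1$ (\refthm{main_thm_1}) yields the claim.

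For the converse, suppose $\kappa = (\xi, \eta)$ yields a planar spin decoration. Projecting out the spin lift leaves a decorated horosphere whose centre must lie in $\R \cup \{\infty\}$ and whose specifying vector must be a positive real multiple of $i$. Applying Proposition~\ref{Prop:horospheres_explicitly}: if $\eta = 0$ the specification is $i\xi^2$, a positive multiple of $i$ iff $\xi^2 \in \R_{>0}$, iff $\xi \in \R$; if $\eta \neq 0$ the north-pole specification is $i/\eta^2$, which is a positive multiple of $i$ iff $\eta \in \R$, and then reality of the centre $\xi/\eta$ forces $\xi \in \R$. In either case $\kappa \in \R^2$.

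The only real obstacle is the bookkeeping for the spin lift: the basepoint normalisation in $\widetilde{\Phi}_1, \widetilde{\Phi}_2$ must be consistent so that $SL(2,\R)$-equivariance carries the planar spin lift at $(1,0)$ to planar spin lifts at all real spinors. This is automatic because the $SL(2,\R)$-orbit of $(1,0)$ is connected (indeed diffeomorphic to $\R^2 \setminus \{0\}$) and planarity is a closed-open condition on the two-sheeted cover of the underlying decoration space. The reverse direction uses only the underlying non-spin decoration and so is entirely insensitive to the choice of lift.
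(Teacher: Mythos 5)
Your ``only if'' direction is exactly the paper's proof: planarity of the underlying decoration means centre in $\R \cup \{\infty\}$ and specifying vector a positive multiple of $i$, and \refprop{horospheres_explicitly} then forces $\xi,\eta \in \R$; the paper proves the ``if'' direction the same way (real $\xi,\eta$ give centre $\xi/\eta \in \R\cup\{\infty\}$ and specification $i\xi^2$ or $i\eta^{-2}$, a positive multiple of $i$), whereas you route it through the base case $(1,0)$ and $SL(2,\R)$-equivariance. That detour is valid but costs you an extra claim to justify (that $SL(2,\R)$ preserves planarity of decorations, i.e.\ preserves $\hyp^2$ and its coorientation), which you only assert in passing, while the direct computation needs nothing beyond \refprop{horospheres_explicitly}.

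The one point to correct is your treatment of the spin lift. A planar spin decoration is, by definition, any spin decoration whose \emph{projection} is a frame field specified by $i$; the condition does not distinguish the two lifts, and indeed the paper notes immediately after this lemma that each horocycle has exactly two planar spin decorations, corresponding to the real spinors $\pm\kappa$. So no ``normalisation of the basepoint so that the lift is the planar one'' is needed, and your closing connectedness/closed-open argument addresses a non-issue. Worse, if planarity really did single out one of the two lifts, your argument could not work and the lemma itself would fail: $\kappa$ and $-\kappa$ are both real and lie in the same component of $\R^2\setminus\{0\}$, yet their spin decorations differ by a $2\pi$ rotation, so they land on different sheets over the same decorated horocycle. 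Dropping that paragraph and observing that planarity is a condition on the projected frame field makes your proof correct as stated.
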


\begin{proof}
The spin-decorated horosphere of $\kappa = (\xi, \eta)$ will be a planar spin decoration on a horocycle if and only if the centre $\xi/\eta \in \R \cup \{\infty\}$ and the decoration direction $i/\eta^2$ (if $\eta \neq 0$) or $i \xi^2$ (if $\eta = 0$) is a positive real multiple of $i$. This happens precisely when $\xi, \eta$ are real.
\end{proof}

Thus, we can reduce to two dimensions by considering \emph{real} spinors, i.e. those in $\R_*^2 = \R^2 \setminus \{ (0,0) \}$. Then all complex distances $d_{ij}$ between horospheres are 
of the form $\rho + i \theta$ where $\theta \equiv 0$ mod $2\pi$,
 so the $\lambda_{ij} = \exp(d_{ij}/2)$ are 
real. Taking absolute values, we obtain
Penner's real lambda lengths between horocycles from \cite{Penner87, Penner12_book}.

A horocycle in $\U^2$ has two planar spin decorations, corresponding to the two spin lifts of frames specified by the $i$ direction. These two planar spin decorations correspond to two real spinors, which are negatives of each other.

\section{Cluster algebra applications}
\label{Sec:cluster_applications}

We now develop the notions required to prove \refthm{T_Gr}.

For reasons that will become apparent, we will consider $\hyp^2$ via the upper half plane model $\U^2$, but to have the orientation induced by the normal vector in the $i$ direction in the upper half space model; this is the opposite to the usual orientation. Then the boundary circle $\partial \U^2 \cong \R \cup \{\infty\}$ obtains an orientation in the \emph{negative} real direction.
\begin{defn}
An \emph{ideal $d$-gon} is a collection of distinct points $z_1, \ldots, z_d$ in $\partial \U^2$, labelled in order around the oriented boundary $\partial \U^2$. A \emph{decoration} on an ideal $d$-gon is a choice of horocycle at each $z_i$.
\end{defn}
We can join the points $z_1, \ldots, z_d$ (and back to $z_1$) successively by geodesics to form an ideal $d$-gon in the usual sense, but for our purposes we do not need the edges; for us a $d$-gon is just the sequence of vertices, we just need the $z_i$. Given the orientation on $\partial \U^2$, this means that the $z_i \in \R \cup \{\infty\}$ satisfy either
\begin{equation}
\label{Eqn:cyclic_ordering_all_real}
z_k > z_{k+1} > \cdots > z_d > z_1 > z_2 > \cdots > z_{k-1}
\end{equation}
or
\begin{equation}
\label{Eqn:cyclic_ordering_including_infinity}
 z_k = \infty \quad \text{and} \quad z_{k+1} > \cdots > z_d > z_1 > \cdots > z_{k-1}
\end{equation}
for some $k$.

In 3 dimensions, we lose the ordering on vertices of an ideal $d$-gon, and instead use the following weaker notion. Again, our definition is just a sequence of ideal vertices, although we can imagine joining them by geodesics.
\begin{defn}
\label{Def:skew_d-gon}
A \emph{skew ideal $d$-gon} is a collection of distinct points $z_1, \ldots, z_d \in \partial \U^3$. A \emph{spin decoration} on a skew ideal $d$-gon is a choice of spin-decorated horosphere centred at each $z_i$.
\end{defn}

We now use a special case of Penner's definition in \cite{Penner87} in 2 dimensions, and generalise it naturally to 3 dimensions.
\begin{defn} \
\label{Def:Teichmuller}
\begin{enumerate}
\item
The \emph{decorated Teichm\"{u}ller space} $\widetilde{T}(d)$ of ideal $d$-gons is the space of all decorated ideal $d$-gons, up to orientation-preserving isometries of $\U^2$.
\item
The \emph{decorated Teichm\"{u}ller space} $\widetilde{T}^3 (d)$ of skew ideal $d$-gons is the space of all spin-decorated skew ideal $d$-gons, up to spin isometries of $\U^3$.
\end{enumerate}
\end{defn}
In other words, the orientation-preserving isometry group $PSL(2,\R)$ acts on the space of decorated ideal $d$-gons, and $\widetilde{T}(d)$ is the quotient. Similarly, the spin isometry group $SL(2,\C)$ acts on the space of spin-decorated skew ideal $d$-gons, and $\widetilde{T}^3 (d)$ is the quotient.

In the 2-dimensional case, with real spinors, the following statements demonstrate that a notion of total positivity has nice hyperbolic-geometric consequences. Similar ideas also appear in the physics literature, e.g. \cite{ABCGPT16}.
\begin{defn}
A collection of spinors $\kappa_1, \ldots, \kappa_n$ is \emph{totally positive} if they are all real, and for all $i<j$ we have $\{\kappa_i, \kappa_j\} > 0$.
\end{defn}
Note that the totally positive condition implies that the $\kappa_i$ are all linearly independent, so the corresponding horospheres are all centred at distinct points $z_i \in \R \cup \{\infty\}$; and by antisymmetry, when $i>j$ we have $\{\kappa_i, \kappa_j\}<0$.

\begin{lem}
\label{Lem:spin-coherent_ideal_d-gons}
Let $d \geq 3$. If $\kappa_1, \ldots, \kappa_d$ are totally positive then the centres $z_i$ of the corresponding horospheres form an ideal $d$-gon. The planar spin-decorated horospheres of $\kappa_1, \ldots, \kappa_d$ yield a map
\[
\left\{ \text{Totally positive $d$-tuples of spinors} \right\}
\To
\left\{ \text{Decorated ideal $d$-gons} \right\} 
\]
which is surjective and 2-1, with the preimage of each ideal $d$-gon of the form $\pm (\kappa_1, \ldots, \kappa_d)$.
\end{lem}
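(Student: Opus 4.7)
The plan is to exploit the $SL(2,\R)$-action on real spinors, which preserves the symplectic form $\{\cdot,\cdot\}$ (hence the totally positive condition) and acts on $\partial\hyp^2 = \R \cup \{\infty\}$ by orientation-preserving M\"{o}bius transformations (hence permutes cyclic orderings of the forms \refeqn{cyclic_ordering_all_real} and \refeqn{cyclic_ordering_including_infinity} among themselves). By \reflem{real_spinors_planar_decorations} each real $\kappa_i$ yields a planar spin-decorated horocycle with centre $z_i = \xi_i/\eta_i \in \R \cup \{\infty\}$, and total positivity immediately forces $\{\kappa_i,\kappa_j\} \neq 0$ for $i \neq j$, so the $z_i$ are distinct.

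To establish the cyclic ordering, I would first normalize by an element of $SL(2,\R)$ to place $\kappa_1 = (1,0)$, so $z_1 = \infty$. Then $\{\kappa_1,\kappa_j\} = \eta_j > 0$ fixes $\eta_j > 0$ for $j \geq 2$, and in that regime the identity
\[
\{\kappa_i,\kappa_j\} = \xi_i\eta_j - \eta_i\xi_j = \eta_i\eta_j(z_i - z_j), \quad i,j \geq 2,
\]
is positive precisely when $z_i > z_j$. Total positivity therefore forces $z_2 > z_3 > \cdots > z_d$, which is exactly \refeqn{cyclic_ordering_including_infinity} with $k = 1$. Undoing the normalization with an $SL(2,\R)$-element recovers a cyclic ordering of \refeqn{cyclic_ordering_all_real} or \refeqn{cyclic_ordering_including_infinity} type for the original tuple.

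For surjectivity I would run the same normalization in reverse: given a decorated ideal $d$-gon, use $SL(2,\R)$ to place $z_1 = \infty$, so $z_2 > z_3 > \cdots > z_d$. By \reflem{real_spinors_planar_decorations} each horocycle admits exactly two planar spin decorations, realised by a pair $\pm\kappa_i$ of real spinors of centre $z_i$; choosing signs so that $\xi_1 > 0$ and $\eta_j > 0$ for $j \geq 2$ and applying the same bilinear identity shows the resulting tuple is totally positive and maps to the given $d$-gon. For the two-to-one count, if $(\epsilon_1\kappa_1,\ldots,\epsilon_d\kappa_d)$ with $\epsilon_i \in \{\pm 1\}$ is another totally positive preimage, then $\epsilon_i\epsilon_j\{\kappa_i,\kappa_j\} > 0$ forces $\epsilon_i\epsilon_j = 1$ for every $i<j$, so all the $\epsilon_i$ coincide and only the two preimages $\pm(\kappa_1,\ldots,\kappa_d)$ survive.

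The real conceptual content sits in the normalization step; once one spinor is brought to $(1,0)$, everything reduces to a bookkeeping exercise in the chart where $\eta_i \neq 0$. The only subtlety I expect is confirming that the normalized cyclic order (type \refeqn{cyclic_ordering_including_infinity} with $k=1$) pulls back under $SL(2,\R)$ to an \emph{arbitrary} cyclic order of type \refeqn{cyclic_ordering_all_real} or \refeqn{cyclic_ordering_including_infinity}; this follows from transitivity of $SL(2,\R)$ on ordered triples of distinct boundary points in $\partial\hyp^2$ together with its preservation of cyclic order.
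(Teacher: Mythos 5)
Your proposal is correct, and it reaches the same two pillars the paper relies on --- the identity $z_i - z_j = \{\kappa_i,\kappa_j\}/(\eta_i\eta_j)$ and the fact that the two planar spin decorations of a horocycle correspond to a pair $\pm\kappa$ of real spinors --- but it organises the argument differently. The paper proves the cyclic-ordering claim directly, by a sign analysis of the $\eta_i$ over all triples $z_i,z_j,z_k$ with $i<j<k$, ruling out the badly ordered configurations by contradiction and then treating the case where some $z_k=\infty$ separately; its converse direction likewise fixes a sign of $\kappa_1$ and checks positivity case by case (and in fact only writes out the all-real case explicitly). You instead normalise by an element of $SL(2,\R)$ so that $\kappa_1=(1,0)$, i.e.\ $z_1=\infty$, after which total positivity reads off as $\eta_j>0$ and $z_2>z_3>\cdots>z_d$ in one line, and you transport the conclusion back using that the $SL(2,\R)$-action preserves $\{\cdot,\cdot\}$, realness of spinors (hence planarity of decorations, by \reflem{real_spinors_planar_decorations}), and the cyclic order on $\partial\hyp^2$; the same normalisation makes surjectivity and the $2$--$1$ count uniform across the cases \refeqn{cyclic_ordering_all_real} and \refeqn{cyclic_ordering_including_infinity}. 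What your route buys is a shorter, case-free computation that exploits the equivariance already established in the paper (the paper itself uses exactly this normalisation trick in other proofs, e.g.\ \refthm{main_thm_2}); what the paper's route buys is that it never has to invoke invariance of the boundary cyclic order under M\"{o}bius transformations, everything being checked in fixed coordinates. The only point to state carefully in your version is that the ordering conditions \refeqn{cyclic_ordering_all_real}--\refeqn{cyclic_ordering_including_infinity} are precisely the intrinsic condition of being in order around the oriented circle $\partial\hyp^2$, which is preserved by the orientation-preserving action of $PSL(2,\R)$; granting that (standard) fact, your argument is complete.
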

In other words, the totally positive condition forces the $z_i$ to be in order around $\partial \U^2$, and we obtain a decorated ideal $d$-gon. Conversely, any decorated ideal $d$-gon is realised by precisely two $d$-tuples of totally positive real spinors, which are negatives of each other.

\begin{proof}
Letting $\kappa_i = (\xi_i, \eta_i)$ be totally positive we have
\begin{equation}
\label{Eqn:z_difference_eta_sign}
z_i - z_j = \frac{\xi_i}{\eta_i} - \frac{\xi_j}{\eta_j} = \frac{\{\kappa_i, \kappa_j\}}{\eta_i \eta_j}.
\end{equation}
Supposing $i<j$ then, we have $\{\kappa_i, \kappa_j\}>0$, so $\eta_i$ and $\eta_j$ have the same sign when $z_i > z_j$,  and $\eta_i$ and $\eta_j$ have opposite signs when $z_i < z_j$.

If $z_1, z_2, z_3$ are real and satisfy $z_1 < z_2 < z_3$, or $z_2 < z_3 < z_1$, or $z_3 < z_1 < z_2$, then we obtain a contradiction. We show this in the case $z_1 < z_2 < z_3$; the other cases are similar. From $z_1< z_2$, $\eta_1$ and $\eta_2$ have opposite signs. From $z_2 < z_3$, $\eta_2$ and $\eta_3$ have opposite signs. Thus $\eta_1$ and $\eta_3$ have the same sign. But $\eta_1$ and $\eta_3$ have opposite signs since $z_1 < z_3$, a contradiction.

This argument applies not just to $z_1, z_2, z_3$ but to any $z_i, z_j, z_k$ such that $i<j<k$. These are precisely the cases in which $z_i, z_j, z_k$ are not in order around $\partial \U^2$. Thus every triple of real numbers among the $z_i$ is in order around $\partial \U^2$. 

If all $z_i$ are real, then considering the triples $(z_1, z_2, z_3)$, $(z_1, z_3, z_4)$, $\ldots$, $(z_1, z_{d-1}, z_d)$ shows that all $z_i$ are in order around $\partial \U^2$, satisfying \refeqn{cyclic_ordering_all_real}.

Suppose some $z_k = \infty$, so $z_k = (\xi_k, 0)$. We then have $\{\kappa_i, \kappa_k\} = -\eta_i \xi_k$. For $i<k$ then $\eta_i \xi_k$ is positive, so $\eta_i$ has the same sign as $\xi_k$. Similarly, for $i>k$, $\eta_i$ has opposite sign to $\xi_k$. Thus if $i<k<j$ then $\eta_i, \eta_j$ have opposite signs, so from \refeqn{z_difference_eta_sign} then $z_i < z_j$. Applying the reasoning of the previous paragraph, it follows that \refeqn{cyclic_ordering_including_infinity} is satisfied.

Thus the $z_i$ are in order around $\partial \U^2$ and form an ideal $d$-gon, and by \reflem{real_spinors_planar_decorations} each $\kappa_i$ yields a planar spin-decorated horosphere at $z_i$, hence a horocycle decoration in $\partial \U^2$.

Conversely, suppose the $z_i$ with horocycles $H_i$ form a decorated ideal $d$-gon in $\U^2$. Each $z_i$ has two planar spin decorations, given by two real spinors of the form $\pm \kappa_i$. Choosing a sign for $\kappa_1$, the requirement that each $\{\kappa_1, \kappa_j\} > 0$ forces a choice for each other $\kappa_j$. This yields two possible $d$-tuples of real spinors describing the decorated ideal $d$-gon; we will show they are both totally positive.

Suppose all $z_i$ are real, satisfying \refeqn{cyclic_ordering_all_real} for some $k$. Then by \refeqn{z_difference_eta_sign} $\eta_i$ has the same sign as $\eta_1$ for $2 \leq i \leq k-1$, and $\eta_i$ has a different sign from $\eta_1$ for $k+1 \leq i \leq d$. It follows that, for $i<j$, $\eta_i$ and $\eta_j$ have the same sign precisely when $z_i < z_j$, so by \refeqn{cyclic_ordering_all_real} $\{\kappa_i, \kappa_j \} > 0$ for all $i<j$, and the $\kappa_i$ are totally positive.
\end{proof}

We can then give a description of $\widetilde{T}(d)$ in terms of totally positive spinors. The action of $SL(2,\R)$ on real spinors extends to an action on $d$-tuples as $A.(\kappa_1, \ldots, \kappa_d) = (A.\kappa_1, \ldots, A.\kappa_d)$. We then obtain the following.
\begin{prop}
\label{Prop:Td_totally_positive_spinors}
Let $d \geq 3$. The $SL(2,\R)$-orbits of totally positive $d$-tuples of real spinors are  naturally bijective with $\widetilde{T}(d)$:
\begin{equation}
\label{Eqn:spin-coherent_tuples_ideal_d-gons}
\frac{ \left\{ \text{Totally positive  $d$-tuples of real spinors} \right\} }{ SL(2,\R) }
\stackrel{\cong}{\To}
\frac{ \left\{ \text{Decorated ideal $d$-gons} \right\} }{ PSL(2,\R) } = \widetilde{T}(d).
\end{equation}
\end{prop}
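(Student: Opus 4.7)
The plan is to show that the surjective, two-to-one map of \reflem{spin-coherent_ideal_d-gons} descends to a bijection between the two quotients, essentially because the fibre $\pm (\kappa_1,\ldots,\kappa_d)$ of that map corresponds exactly to the kernel $\{\pm I\}$ of $SL(2,\R) \to PSL(2,\R)$.

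First I would check that $SL(2,\R)$ acts diagonally on totally positive $d$-tuples: a real matrix takes real spinors to real spinors, and since $\det A = 1$ we have $\{A\kappa_i, A\kappa_j\} = \{\kappa_i,\kappa_j\}$, so total positivity is preserved. Next, by the real restriction of the $SL(2,\C)$-equivariance established in \refsec{spin_vectors_to_Minkowski}--\refsec{Minkowski_horospheres} (combined with \reflem{real_spinors_planar_decorations}), the map sending a totally positive $d$-tuple to its decorated ideal $d$-gon is $SL(2,\R)$-equivariant, where $SL(2,\R)$ acts on the target through its projection to $PSL(2,\R)$. Consequently the map descends to a well-defined map $\bar\Phi$ on orbit spaces, and surjectivity is immediate from \reflem{spin-coherent_ideal_d-gons}.

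For injectivity, suppose two totally positive $d$-tuples $(\kappa_i)$ and $(\kappa_i')$ produce decorated ideal $d$-gons that agree after applying some $[A] \in PSL(2,\R)$. Lift to either representative $A$ or $-A$ in $SL(2,\R)$. By equivariance $A\cdot(\kappa_1,\ldots,\kappa_d)$ and $(\kappa_1',\ldots,\kappa_d')$ determine the same decorated $d$-gon, so by the two-to-one conclusion of \reflem{spin-coherent_ideal_d-gons} we have $A\cdot(\kappa_i) = \pm(\kappa_i')$; replacing $A$ by $-A$ if necessary (which still lies in $SL(2,\R)$), we obtain $A\cdot(\kappa_i) = (\kappa_i')$, so the two $d$-tuples lie in the same $SL(2,\R)$-orbit.

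The main point to handle carefully is the coordination of the two $\pm$ ambiguities: the fibre of the spinors-to-decorated-horocycles map is $\{\pm 1\}$ acting diagonally on the $d$-tuple, which is precisely the action of $-I \in SL(2,\R)$, so quotienting the source by $SL(2,\R)$ and the target by $PSL(2,\R)$ exactly absorbs the two-to-one behaviour. No further calculation is needed beyond this bookkeeping, so the argument is short once \reflem{spin-coherent_ideal_d-gons} and the equivariance of $\widetilde{\Phi_2} \circ \widetilde{\Phi_1}$ are in hand.
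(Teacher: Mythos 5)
Your proposal is correct and follows essentially the same route as the paper: descend the map of \reflem{spin-coherent_ideal_d-gons} to the quotients via $SL(2,\R)$-equivariance, get surjectivity from the lemma, and get injectivity by matching the two-to-one fibre $\pm(\kappa_1,\ldots,\kappa_d)$ with the action of $-I \in SL(2,\R)$. Your write-up just spells out the injectivity bookkeeping (lifting $[A]$ to $\pm A$) slightly more explicitly than the paper does.
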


\begin{proof}
We first show the map of \reflem{spin-coherent_ideal_d-gons}, sending totally positive $d$-tuples to decorated ideal $d$-gons, descends to a map as in \refeqn{spin-coherent_tuples_ideal_d-gons}. If two totally positive $d$-tuples are related by the action of $SL(2,\R)$, then by equivariance of the action of $SL(2,\R) \subset SL(2,\C)$, the resulting spin-decorated horospheres are related by a spin isometry of $\U^2 \subset \U^3$, and dropping spin structures, the underlying decorated ideal $d$-gons are related by an isometry in $PSL(2,\R)$.

Thus the map of \refeqn{spin-coherent_tuples_ideal_d-gons} exists. It is also surjective since the map of \reflem{spin-coherent_ideal_d-gons} is. To see that it is injective, note the map of \reflem{spin-coherent_ideal_d-gons} is 2--1, with the two preimages of a given decorated $d$-gon being negatives of each other. These two preimages  are related by the action of the negative identity in $SL(2,\R)$, giving a unique preimage.
\end{proof}

We now define the Grassmannians we need. For background and context on positive Grassmannians, see e.g. \cite{Baur21, Lusztig94, Postnikov06, Williams21, Williams14}. Recall that the Grassmannian $\Gr_\mathbb{F}(k,n)$ over a field $\mathbb{F}$ is the space of all $k$-planes in $\mathbb{F}^n$. It can be realised as the quotient of $\Mat^{k}_\mathbb{F} (k,n)$, the space of all $k \times n$ matrices over $\mathbb{F}$ of rank $k$, by the left action of $GL(k,\mathbb{F})$. A matrix represents the $k$-plane spanned by its rows. The $k \times k$ minors of a matrix yield $\binom{n}{k}$ projective coordinates on $\Gr_\mathbb{F}(k,n)$ called \emph{Pl\"{u}cker coordinates}. We only consider $k=2$ and $\mathbb{F} = \R$ or $\C$.
\begin{defn}
Let $\Mat_\R^+ (2,d)$ denote the space of all $2 \times d$ real matrices with all Pl\"{u}cker coordinates positive. The \emph{positive Grassmannian} $\Gr^+(2,d)$ is the quotient of $\Mat_\R^+ (2,d)$ by the left action of $GL^+(2,\R)$. The \emph{positive affine Grassmannian} $X^+ (d)$ is the affine cone on $\Gr^+(2,d)$.
\end{defn}
The Pl\"{u}cker coordinates on a Grassmannian are only defined up to an overall factor, but they provide bona fide coordinates on the affine cone.

The affine cone on the Grassmannian $\Gr_\R (2,d)$, as in \cite[example 12.6]{Fomin_Zelevinsky03}, can be identified with the nonzero decomposable elements of the second exterior power $\Lambda^2 ( \R^d )$. The plane spanned by two rows $R_1, R_2$ in a $2 \times d$ matrix is represented by $R_1 \wedge R_2$, and the action of $A \in GL(2,\R)$ is by
\[
A.(R_1 \wedge R_2) = (a R_1 + b R_2) \wedge (c R_1 + d R_2) =  ( \det A  ) \, R_1 \wedge R_2
\quad \text{where} \quad
A = \begin{bmatrix} a & b \\ c & d \end{bmatrix}.
\]
Taking the quotient by $GL(2,\R)$ thus identifies matrices whose corresponding decomposable elements of $\Lambda^2 (\R^d)$ represent the same $2$-plane in $\R^d$. Taking the quotient by $SL(2,\R)$ identifies matrices whose corresponding elements of $\Lambda^2 (\R^d)$ are equal, and thus the affine cone on $\Gr_\R(2,d)$ is the quotient of $\Mat^2_\R (2,d)$ by the left action of $SL(2,\R)$. Restricting to matrices in $\Mat_\R^+ (2,d)$, taking the quotient by $GL^+(2,\R)$ again identifies matrices whose corresponding decomposable elements of $\Lambda^2 (\R^d)$ which represent the same $2$-plane, and taking the quotient by $SL(2,\R)$ identifies matrices whose corresponding elements of $\Lambda^2 (\R^d)$ are equal. Thus $X^+ (d)$ is the quotient of $\Mat_\R^+ (2,d)$ by the left action of $SL(2,\R)$.

\begin{proof}[Proof of \refthm{T_Gr}(i)]
We now have, by \refprop{Td_totally_positive_spinors} and the above discussion
\[
\widetilde{T}(n) \cong \frac{\{ \text{Totally positive $d$-tuples of spinors} \}}{SL(2,\R)}
\quad \text{and} \quad
X^+ (n) = \frac{\Mat_\R^+ (2,d) }{SL(2,\R)}.
\]
Placing a $d$-tuple of real spinors $(\kappa_1, \ldots, \kappa_d)$ as the columns of a $2 \times d$ matrix, the totally positive condition is that $\{\kappa_i, \kappa_j\} > 0$ for $i<j$. Each such $\{\kappa_i, \kappa_j\}$ is then none other than the determinant of the $2 \times 2$ minor formed by columns $i$ and $j$, i.e. the Pl\"{u}cker coordinate $p_{ij}$, so we precisely obtain the matrices in $\Mat_\R^+ (2,d)$. The actions of $SL(2,\R)$ on totally positive $d$-tuples and $\Mat_\R^+ (2,d)$ are identical, so we obtain an identification $\widetilde{T}(n) \To X^+ (n)$. By \refthm{main_thm_2} each (complex) lambda length $\lambda_{ij}$ on $\widetilde{T}(n)$ is equal to $\{\kappa_i, \kappa_j\}$, which we have seen is equal to the Pl\"{u}cker coordinate $p_{ij}$ on $X^+ (n)$.
\end{proof}

In a similar fashion over $\C$, we can consider the subvariety of the Grassmannian where all Pl\"{u}cker coordinates are nonzero.
\begin{defn}
Let $\Mat_\C^* (2,d)$ denote the space of all $2 \times d$ complex matrices with all Pl\"{u}cker coordinates nonzero. The \emph{nonzero Grassmannian} $\Gr_\C^* (2,d)$ is the quotient of $\Mat_\C^* (2,d)$ by the left action of $GL(2,\C)$. The \emph{nonzero affine Grassmannian} $X^* (d)$ is the affine cone on $\Gr_\C^* (2,d)$.
\end{defn}
Again the affine cone on $\Gr_\C (2,d)$ can be identified with nonzero decomposable elements in $\Lambda^2 (\C^d)$, and taking the quotient by $SL(2,\C)$ identifies precisely those matrices whose corresponding elements of $\Lambda^2 (\C^d)$ are equal. Thus $X^* (d)$ is the quotient of $\Mat_\C^* (2,d)$ by the left action of $SL(2,\C)$.

\begin{proof}[Proof of \refthm{T_Gr}(ii)]
In a spin-decorated skew ideal $d$-gon, at each ideal vertex $z_i$ we have a spin-decorated horosphere corresponding to a spinor $\kappa_i$. The fact that all $z_i$ are distinct (\refdef{skew_d-gon}) implies that for all $i \neq j$ we have $\{\kappa_i, \kappa_j\} \neq 0$. By \refdef{Teichmuller}, $\widetilde{T}^3 (d)$ is the space of all spin-decorated skew ideal $d$-gons, up to spin isometries, so
\[
\widetilde{T}^3 (d) = \frac{ \{ \text{$d$-tuples of spinors with $\{\kappa_i, \kappa_j\} \neq 0$ for $i \neq j$ } \} }{ SL(2,\C) }
\quad \text{and} \quad
X^* (d) = \frac{ \Mat_\C^* (2,d) }{ SL(2,\C) }.
\] 
Again, putting the $d$ spinors as the columns of a matrix and noting that the $SL(2,\C)$ actions are identical gives an identification $\widetilde{T}^3 (d) \To X^* (d)$, and each complex lambda length $\lambda_{ij} = \{\kappa_i, \kappa_j\}$ is equal to the corresponding Pl\"{u}cker coordinate $p_{ij}$.
\end{proof}

\section*{Declarations}

{\flushleft \textbf{Conflict of Interests / Competing Interests statement.} }
The author has no conflicting or competing financial or non-financial interests directly or indirectly related to this work.

{\flushleft \textbf{Data availability statement.} }
This work has no associated data.

\small

\bibliography{spinref}
\bibliographystyle{amsplain}

\end{document}